\newtheorem{thm}{Theorem}[section]
\newtheorem{cor}[thm]{Corollary}
\newtheorem{prop}[thm]{Proposition}
\newtheorem{thrm}[thm]{Theorem}
\newtheorem{lm}[thm]{Lemma}
\newtheorem{rmk}[thm]{Remark}
\newtheorem{xmpl}[thm]{Example}
\theoremstyle{definition}
\newtheorem{defin}[thm]{Definition}
\newtheorem{rem}[thm]{Remark}
\newtheorem{exa}[thm]{Example}
\newcommand{\bn}{\mathbb{N}}
\newcommand{\br}{\mathbb{R}}
\newcommand{\ov}[1]{\overline{#1}}
\newcommand{\cl}{\textup{cl}}
\newcommand{\pref}[1]{\textup{(\ref{#1})}}
\begin{document}
\title[compactness and fixed point in partial metric spaces]{On compactness and fixed point theorems in partial metric spaces}

\author{Dariusz Bugajewski}
\author{Piotr Ma\'ckowiak}
\author{Ruidong Wang}

\address{D. Bugajewski, Department of Mathematics and Computer Science\\
Adam Mickiewicz University in Pozna\'n\\Umultowska 87\\
61-614 Pozna\'n, Poland}
\email{ddbb@amu.edu.pl}
\address{P.Ma\'ckowiak, Department of Mathematical Economics\\
Pozna\'n University of Economics and Business\\
Al.~Nie\-podleg\l{}o\'sci 10\\
61-875 Pozna\'n, Poland}
\email{piotr.mackowiak@ue.poznan.pl}
\address{R. Wang, College of Science\\
Tianjin University of Technology\\
300384, Tianjin, China}
\email{wangruidong@tjut.edu.cn}
\date{}

\subjclass[2010]{54D30, 54E35, 54E50, 54H25}
\keywords{Banach Contraction Principle, compactness, completeness, fixed point theorem, metrizability, partial metric spaces, sequential compactness}

\begin{abstract}
In this paper we examine two basic topological properties of partial metric spaces, namely compactness and completeness. Our main result claims that in these spaces compactness is equivalent to sequential compactness. We also show that Hausdorff compact partial metric spaces are metrizable. In the second part of this article we discuss the significance of bottom sets of partial metric spaces in fixed point theorems for mappings acting in these spaces.
\end{abstract}

\maketitle

\section{Introduction}

The aim of the present paper is to show that, in spite of the fact that many results are lost when passing from metric to partial metric spaces, there are some ones which are preserved by this generalization. 

It is not necessary to convince anyone how important the notion of compactness is. It is well-known that in the case of metric spaces compactness is equivalent to sequential compactness. Therefore it is natural to ask about a relation between those two notions in the case of partial metric spaces. It appears that also in the case of partial metric spaces these two notions are equivalent. That is the main result of this paper. Let us notice that, while the proof of the fact that compactness implies sequential compactness in partial metric spaces is standard and trivial, the tools we use to prove the inverse implication are much more subtle. We also show that any Hausdorff compact partial metric space is a metrizable space.

The other issues we deal with in this article are connected with the notion of completeness in partial metric spaces as well as with fixed point theorems for mappings acting in these spaces. In particular, we indicate a partial metric space and a contraction acting in it, which is not a continuous mapping (the reader interested in contractions and generalized contractions acting in partial metric spaces is referred e.g. to \cite{ASS}, \cite{ADA}, \cite{CKT}, \cite{IPR} and \cite{MD}). 

A particular attention is paid to the set of complete elements (we call it bottom set) of partial metric spaces and their role in fixed point theorems. A useful tool in these considerations is the fixed point theorem proved in the paper \cite{IPR}. Our investigations lead to the conclusion that the assumption of the non-emptiness of a bottom set appearing in fixed point theorems seems to be a very strong one. Moreover, it appears that in a complete partial metric space the condition of non-emptiness of a bottom set is equivalent to the existence of a constant mapping satisfying the condition \eqref{eqn:1} of Theorem \ref{thm:1}.

For convenience of the reader in the next section we collect some basic definitions and facts which are necessary to understand the further part of the paper. 
\section{Preliminaries}

In what follows $\bn$ denotes the set of positive integers and $\bn_0:=\bn\cup\{0\}$. For a set $A\subset X$, where $X$ is a topological space, $\cl A$ denotes the closure of $A$. For a function $f:X\to Y$, $X,\,Y$ are some sets, and a subset $A\subset X$ by $f|_A:A\to Y$ we denote the restriction of the function $f$ to the set $A$: $f|_A(x):=f(x),\, x\in A$.

Let $U$ be a nonempty set. A function $p:U\times U\to \br_+$ is called a partial metric on $U$ if for $x,y,z\in U$ the following conditions are satisfied
\begin{enumerate}[label={\textup{(\arabic*)}},ref=\textup{.(\arabic*)}]
\item $x=y\Leftrightarrow p(x,x)=p(x,y)=p(y,y)$;
\item $p(x,x)\leq p(y,x)$;
\item  $p(x,y)=p(y,x)$;
\item $p(x,y)\leq p(x,z)+p(z,y)-p(z,z)$.
\end{enumerate}
The pair $(U,p)$ is called a partial metric space.

For a given partial metric space $(U,p)$ we define (cf. \cite{IPR})
$$\rho_p:=\inf\{p(x,x):\, x\in U\},\quad U_p:=\{x\in U:\, p(x,x)=\rho_p\},$$
and call the latter the \emph{bottom set} of the space. Let us notice that $\rho_p$ is well defined and it is possible that $U_p=\emptyset$.

For a partial metric space $(U,p)$ we define mappings $p^m,\,\ov{p}:U\times U\rightarrow \mathbb{R}_+$ by
\[
p^m(x,y):=2p(x,y)-p(x,x)-p(y,y),
\]
\[
\ov{p}(x,y):=p(x,y)-\rho_p,
\]
for any $x,y\in U$. It is well-known that $p^m$ is a metric on $U$. It is obvious that $\ov{p}$ is a partial metric on $U$, but, in addition, the restriction of $\ov{p}$ to the set $U_p\times U_p$ is a metric on $U_p$. It is also clear that $U_{\ov{p}}=U_{p}$ and $\rho_{\ov{p}}=0$.

A sequence $(x_n)_{n\in \mathbb{N}}$ of elements of a partial metric space $(U,p)$ is said to \emph{converge to an element $x\in U$} if
\[
\lim_{n\rightarrow \infty}p(x_n,x)=p(x,x).
\]

A sequence $(x_n)_{n\in \mathbb{N}}$ in $(U,p)$, is said to \emph{properly converge} to $x\in U$, if it converges to $x$ and $\lim_{n\to \infty}p(x_n,x_n)=p(x,x)$. In this case, if $\lim_{n\to \infty}p(x_n,x)=p(x,x)$ and $\lim_{n\to \infty}p(x_n,x_n)=p(x,x)$, then $\lim_{m,n\to \infty}p(x_m,x_n)=p(x,x)$.

A sequence $(x_n)_{n\in \mathbb{N}}$ of elements of a partial metric space $(U,p)$ is said to be a \emph{Cauchy sequence} if there exists (and is finite)
\[
\lim_{n,m\rightarrow\infty}p(x_n,x_m).
\]
A partial metric space $(U,p)$ is said to be \emph{complete} if every Cauchy sequence $(x_n)_{n\in \mathbb{N}}$ in $(U,p)$ of elements of $U$ is properly convergent.

For any $a\geq 0$, a sequence $(x_n)_{n\in \mathbb{N}}$ of elements of a partial metric space $(U,p)$ is said to be a $a$-\emph{Cauchy sequence} if
\[
\lim_{n,m\rightarrow\infty}p(x_n,x_m)=a.
\]
Similarly, a partial metric space $(U,p)$ is said to be $a$-\emph{complete} if every $a$-Cauchy sequence $(x_n)_{n\in \mathbb{N}}$ in $(U,p)$ of elements of $U$ is properly convergent to some element $x\in U$. Clearly, in that case, $p(x,x)=a$.

Let $(U,p)$ be a partial metric space. The open ball centered at $x\in U$ with radius $\varepsilon>0$ is defined as
\[
B(x,\varepsilon):=\{y :\, p(x,y)<p(x,x)+\varepsilon,\ y\in U\}.
\]
The set of all open balls of a partial metric space $(U,p)$ is the basis of a topology of $U$, denoted by $\mathcal{T}[p]$. The topology on $U$ which is generated by the metric $p^m$ is denoted by $\mathcal{T}[p^m]$. It can be shown that the notions of convergence in topology $\mathcal{T}[p]$ ($\mathcal{T}[p^m]$) and in partial metric space $(U,p)$ (metric space $(U,p^m)$) are equivalent. Matthews \cite{M} proved that $\mathcal{T}[p]\subset \mathcal{T}[p^m]$. It is well known that the topological space $(U,\mathcal{T}[p])$ is a $T_0$ first countable space \cite{HWZ} and, in particular, it is a sequential space \cite[pp. 53--54]{ENGEL}. Let us recall that a topological space is said to be $T_1$ space if for any points $x,y$, $x\neq y$, of that space each of them possesses an open neighborhood not containing the other point. In general, partial metric spaces are not $T_1$ spaces. 

Given a partial metric space $(U,p)$, we define the mapping $D:U\times U\rightarrow \mathbb{R}_+$ by
 \begin{eqnarray*}
D(x,y) =\left\{
\begin{aligned}
  p(x,y), \quad  \mbox{if}  \  &&x\not=y, \\
  0, \quad \mbox{if} \  &&x=y.
\end{aligned}
\right.
\end{eqnarray*}
$D$ is a metric on $U$, $\mathcal{T}[p^m]\subset \mathcal{T}[D]$ and the metric space $(U,D)$ is complete if and only if $(U,p)$ is $0-$complete \cite{HRS}.

In connection with the open problem {\sl Question 8.7} from the paper \cite{HWZ} let us notice that a proper partial metric defined on a linear space may not be simultaneously translation invariant and absolutely homogeneous while a metric, defined on such a space, may be. Indeed, if $p$ is a partial metric on a linear space $X$ and $p$ is translation invariant and homogeneous, then it is a metric on $X$. Since $p(k x,k y)=|k|p(x,y)$, $p(0,0)=0$. This implies that $p(x,x)=0$ and $p(x,z)\leq p(x,y)+p(y,z)$. Moreover, $p(x,y)=0$ gives $p(x,x)=p(y,y)=p(x,y)$, so $x=y$. Hence $(X,p)$ is a metric space. Hence, if a partial metric space is a proper partial metric space (that is, it is not a metric space), then it cannot be both translation invariant and absolutely homogeneous.
Let us notice that instead of requiring absolute homogeneity it is enough to impose homogeneity for some positive constant $k$, that is, $p(kx,ky)=kp(x,y),\, x,y\in X$.

\section{Remarks on completeness of a partial metric space}
The following three characterizations of convergence modes in partial metric spaces are well-known (see \cite{ASS,CKT} or \cite[Proposition 5.24]{CO}):
\begin{prop}\label{thm:3:1} Let $(U,p)$ be a partial metric space. 
\begin{enumerate} [label={\textup{(\alph*)}},ref=\textup{\alph*}]
\item\label{thm:3:1:a} A sequence $(x_n)_{n\in \mathbb{N}}$ of elements of $U$ is properly convergent to $x\in U$ if and only if $(x_n)_{n\in \mathbb{N}}$ converges to $x$ with respect to the topology $\mathcal{T}[p^m]$.
\item\label{thm:3:1:b}
Let $(U,p)$ be a partial metric space. A sequence $(x_n)_{n\in \mathbb{N}}$ of elements of $U$ is a Cauchy sequence if and only if $(x_n)_{n\in \mathbb{N}}$ is a Cauchy sequence with respect to the topology $\mathcal{T}[p^m]$.
\item\label{thm:3:1:c}
Let $(U,p)$ be a partial metric space. $(U,p)$ is complete if and only if it is complete with respect to the topology $\mathcal{T}[p^m]$.
\end{enumerate}
\end{prop}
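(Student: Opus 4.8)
The plan is to reduce all three parts to the elementary identity
$p^m(x,y)=\big(p(x,y)-p(x,x)\big)+\big(p(x,y)-p(y,y)\big)$,
in which, by axioms (2) and (3), \emph{both} summands on the right are nonnegative. In each part one implication is routine and the reverse one carries the content, so I would organize the argument accordingly.

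For \pref{thm:3:1:a}: if $(x_n)_{n\in\bn}$ properly converges to $x$, then $p(x_n,x)\to p(x,x)$ and $p(x_n,x_n)\to p(x,x)$, hence at once $p^m(x_n,x)=2p(x_n,x)-p(x_n,x_n)-p(x,x)\to 0$. Conversely, if $p^m(x_n,x)\to 0$, write $p^m(x_n,x)$ as the sum of the two nonnegative terms $p(x_n,x)-p(x,x)$ and $p(x_n,x)-p(x_n,x_n)$; since their sum tends to $0$, each tends to $0$. The first gives $p(x_n,x)\to p(x,x)$, and combined with the second it gives $p(x_n,x_n)\to p(x,x)$, i.e. proper convergence.

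For \pref{thm:3:1:b}: if $\lim_{n,m}p(x_n,x_m)=a$ is finite, then taking $m=n$ yields $p(x_n,x_n)\to a$, and therefore $p^m(x_n,x_m)=2p(x_n,x_m)-p(x_n,x_n)-p(x_m,x_m)\to 0$. For the converse — which I expect to be the only genuinely delicate point — suppose $p^m(x_n,x_m)\to 0$ as $n,m\to\infty$. The splitting above gives $p(x_n,x_m)-p(x_n,x_n)\to 0$ and $p(x_n,x_m)-p(x_m,x_m)\to 0$. To finish I need the real sequence $\big(p(x_n,x_n)\big)_{n\in\bn}$ to converge; for this I invoke axiom (4) in the form $p(x_n,x_n)\le p(x_n,x_m)+p(x_m,x_n)-p(x_m,x_m)=2p(x_n,x_m)-p(x_m,x_m)$, together with the symmetric inequality. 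These bound $|p(x_n,x_n)-p(x_m,x_m)|$ by $2\max\{p(x_n,x_m)-p(x_n,x_n),\,p(x_n,x_m)-p(x_m,x_m)\}$, which tends to $0$; hence $\big(p(x_n,x_n)\big)$ is Cauchy in $\br$ and converges to some $b\ge 0$. Then $p(x_n,x_m)=\tfrac12\big(p^m(x_n,x_m)+p(x_n,x_n)+p(x_m,x_m)\big)\to b$, so $(x_n)$ is a Cauchy sequence in $(U,p)$.

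Part \pref{thm:3:1:c} is then purely formal: by \pref{thm:3:1:b} a sequence is Cauchy in $(U,p)$ exactly when it is a Cauchy sequence for the metric $p^m$, and by \pref{thm:3:1:a} it properly converges exactly when it converges in $\mathcal{T}[p^m]$; so "every Cauchy sequence of $(U,p)$ properly converges" is, after this translation, word for word the statement "every $p^m$-Cauchy sequence converges in $\mathcal{T}[p^m]$". The one step deserving real care is the convergence of the diagonal values $p(x_n,x_n)$ in \pref{thm:3:1:b}: nonnegativity from axioms (2) and (3) alone is not enough, and it is precisely the triangle inequality (4) that forces $\big(p(x_n,x_n)\big)$ to be Cauchy in $\br$.
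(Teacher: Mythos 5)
Your proof is correct. Note that the paper itself gives no proof of this proposition: it is stated as well-known, with the details deferred to the cited references (\cite{ASS}, \cite{CKT}, and \cite[Proposition 5.24]{CO}). Your argument is the standard one and fills in those details accurately: the decomposition of $p^m(x,y)$ into the two nonnegative summands $p(x,y)-p(x,x)$ and $p(x,y)-p(y,y)$ handles both directions of \pref{thm:3:1:a} and \pref{thm:3:1:b}, and \pref{thm:3:1:c} then follows by pure translation, exactly as you say. One small quibble with your closing remark: axiom (4) is not actually needed to show that $\bigl(p(x_n,x_n)\bigr)_{n\in\bn}$ is Cauchy in $\br$. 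Axioms (2) and (3) already give $p(x_n,x_n)\leq p(x_n,x_m)$ and $p(x_m,x_m)\leq p(x_n,x_m)$, whence
$$|p(x_n,x_n)-p(x_m,x_m)|\leq \max\{p(x_n,x_m)-p(x_n,x_n),\,p(x_n,x_m)-p(x_m,x_m)\}\to 0,$$
which is even slightly sharper (no factor $2$) than the bound you obtain from the triangle inequality. This does not affect the validity of your proof, only the claim that (4) is ``precisely'' what forces the diagonal values to converge.
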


We know that if a partial metric space $(U,p)$ is complete then it is $0$-complete. The converse is not true.
\begin{exa}
Let $U:=(0,1)$ and for all $x,y\in U$ we define $p(x,y)$ by the formula
$$
p(x,y):=1+\max\{x,y\}
$$
Then $(U,p)$ is a partial metric space, $\rho_p=1$, and $p^m(x,y)=|x-y|$. It follows that there does not exist any 0-Cauchy sequence in $(U,p)$. Thus $(U,p)$ is $0$-complete. It is obvious that $(U,p^m)$ is not complete, so $(U,p)$ is not complete.
\end{exa}
Let us notice that the partial metric space $(U,p)$ in the above example is not metrizable because it is not a Hausdorff space. The example might appear to be a bit artificial because the reason for which the space $(U,p)$ is $0$-complete is that the thickness of the space is positive ($\rho_p=1$) - this trivially entails that there is no $0$-Cauchy sequence. However, this method of constructing (counter)examples is formally correct and seems to be useful (see Example \ref{ex:PMSnotTB}).

Let $(U,p)$ be a partial metric space. If a sequence $(x_n)_{n\in \mathbb{N}}$ of elements of $U$ is convergent to $x\in U$, then we can not deduce that $(x_n)_{n\in \mathbb{N}}$ is a Cauchy sequence. To show this, let us consider the following

\begin{exa}
Let $A:=\{a,b,c\}$ and $U:=2^A$. Let us define $p(x,y):=|x\cup y|$ for $x, y\in U$. Then $(U,p)$ is a partial metric space. Let $x:=\{a,b\}$ and
 \begin{eqnarray*}
x_n :=\left\{
\begin{array}{lll}
  \{a\}, &  \rm{if}  & n\ \mbox{is even}, \\
  \{b\}, & \rm{if} & n\ \mbox{is odd}.
\end{array}
\right.
\end{eqnarray*}
Then we have
$$
\lim_{n\to \infty}p(x_n,x)=p(x,x).
$$
Thus the sequence $(x_n)_{n\in \mathbb{N}}$ is convergent to $x\in U$. However,
 \begin{eqnarray*}
p(x_n,x_m) =\left\{
\begin{array}{lll}
  1, &  \rm{if}  & n+m\ \mbox{is even}, \\
  2, & \rm{if}  & n+m\  \mbox{is odd},
\end{array}
\right.
\end{eqnarray*}
so $(x_n)_{n\in \mathbb{N}}$ is not a Cauchy sequence.
\end{exa}
It appears that proper convergence of a sequence of elements of $U$ implies that it is a Cauchy sequence. More precisely, we have 
\begin{prop}[cf. \cite{CO} Proposition 5.12]
Let $(U,p)$ be a partial metric space. A sequence $(x_n)_{n\in \mathbb{N}}$ of elements of $U$ is properly convergent to $x\in U$, then $(x_n)_{n\in \mathbb{N}}$ is a Cauchy sequence.
\end{prop}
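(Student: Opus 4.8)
The statement is, in essence, the remark placed right after the definition of proper convergence in Section~2: once one knows that a properly convergent sequence satisfies $\lim_{n,m\to\infty}p(x_n,x_m)=p(x,x)$, the Cauchy property is immediate, since $p(x,x)$ is a finite real number. So the plan is to establish that double limit. I would do it directly from the partial metric axioms; alternatively one could combine parts \pref{thm:3:1:a} and \pref{thm:3:1:b} of Proposition~\ref{thm:3:1} with the elementary fact that every convergent sequence in a metric space is Cauchy, but the hands-on computation is just as short. Throughout, fix a sequence $(x_n)_{n\in\mathbb{N}}$ properly convergent to $x$, so that $\lim_{n\to\infty}p(x_n,x)=p(x,x)$ and $\lim_{n\to\infty}p(x_n,x_n)=p(x,x)$.

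For the upper estimate I would apply axiom~(4) with $z:=x$, which gives $p(x_n,x_m)\le p(x_n,x)+p(x,x_m)-p(x,x)$ for all $n,m$. Given $\varepsilon>0$, choose $N$ with $p(x_k,x)<p(x,x)+\varepsilon$ for all $k\ge N$; then $p(x_n,x_m)<p(x,x)+2\varepsilon$ whenever $n,m\ge N$, hence $\limsup_{n,m\to\infty}p(x_n,x_m)\le p(x,x)$. For the lower estimate I would use axiom~(2) in the form $p(x_n,x_n)\le p(x_m,x_n)$; since $p(x_n,x_n)\to p(x,x)$, for every $\varepsilon>0$ there is $N$ with $p(x_m,x_n)>p(x,x)-\varepsilon$ for all $n,m\ge N$, so $\liminf_{n,m\to\infty}p(x_n,x_m)\ge p(x,x)$. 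Combining the two inequalities yields $\lim_{n,m\to\infty}p(x_n,x_m)=p(x,x)$, which is finite, and therefore $(x_n)_{n\in\mathbb{N}}$ is a Cauchy sequence.

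I do not expect any genuine obstacle here; the only point demanding a little care is the bookkeeping with double-indexed limits, which is why I would phrase the argument through $\limsup$ and $\liminf$ of the family $\bigl(p(x_n,x_m)\bigr)$ — or, equivalently, pick a single threshold $N$ that works simultaneously for both defining conditions of proper convergence and estimate $p(x_n,x_m)$ from above and below for $n,m\ge N$. It is perhaps worth remarking afterwards that the converse fails: without the ``diagonal'' condition $p(x_n,x_n)\to p(x,x)$ — that is, for a merely convergent sequence — the off-diagonal values $p(x_n,x_m)$ need not stabilize, as the example immediately preceding this proposition shows, so it is precisely that condition which drives the proof.
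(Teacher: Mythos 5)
Your proof is correct and follows exactly the route the paper intends: the paper states the key fact $\lim_{m,n\to\infty}p(x_m,x_n)=p(x,x)$ for properly convergent sequences in the preliminaries (without proof, deferring to the cited reference), and your two-sided estimate via axiom (4) with $z=x$ for the upper bound and axiom (2) for the lower bound is a complete and accurate verification of it. Nothing is missing.
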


As we know $\mathcal{T}[p]\subset \mathcal{T}[p^m]\subset \mathcal{T}[D]$ and it turns out that a contractive mapping (that is, a mapping $T$ satisfying the condition $p(T(x),T(y))\leq \alpha p(x,y),\, x,y\in U,$ for some $0<\alpha<1$) may not be continuous with respect to the topology $\mathcal{T}[p]$ (nevertheless, it must be continuous with respect to the topology $\mathcal{T}[D]$).

To illustrate this fact, let us consider the following 
\begin{exa}\label{ex:rw}
Let $U:=\{-7,-6,-5\}\cup[0,+\infty)$ and define a partial metric $p$ on $U$ by $p(x,y):=\frac{d(x,y)+f(x)+f(y)}{2}$, $x,y\in U$, where
$$f(x):=\left\{\begin{array}{ll}3+x,&\text{ if } x\in [0,+\infty),\\0,&\text{ if }x=-5,\\1,&\text{ if }x\in \{-7,-6\},\end{array}\right.$$
$x\in U$, and $d(x,y)=|x-y|,\, x,y\in U$. That $p$ is a partial metric on $U$ comes from the fact that $f(x)\geq 0,\,|f(x)-f(y)|\leq |x-y|$, $x,y\in U$, and $d$ is a metric on $U$ (see \cite{AA}). The partial metric space $(U,p)$ is complete \cite[p.194]{M}, $\rho_p=0$ and $U_p=\{-5\}$.
Let, for $x\in U$
$$T(x)=\left\{\begin{array}{ll}-5,&\text{ if }x\in \{-7,-6,-5, 0\},\\-6,&\text{ if } x>0\text{ and } x\notin\{\frac{1}{2q}:\, q\in \bn\},\\ -7,&x\in\{\frac{1}{2q}:\, q\in \bn\}.\end{array}\right.$$
Observe that $p(x,y)\geq 3$ if $x\geq 0$ or $y\geq 0$. Further, $p(T(x),T(y))\leq 2,\, x,y\in U$, and $p(T(x),T(y))=0$, $x,y\in \{-7,-6,-5,0\}$. So, $T$ is a contraction: for $x,y\in U$, $$p(T(x),T(y))\leq \frac{2}{3}p(x,y).$$
Let $x_n:=\frac{1}{n},\, n\in \bn$. Then, for $x\geq 0,$ $\lim_{n\to \infty}p(x_n,x)=\lim_{n\to \infty}\frac{|x-\frac{1}{n}|+3+x+3+\frac{1}{n}}{2}=3+x=p(x,x)$, so $(x_n)_{n\in \bn}$ converges to each $x\geq 0$. If $x\in\{-7,-6,-5\}$, then $p(x_n,x)\neq p(x,x)$, from which we conclude that $(x_n)_{n\in \bn}$ does not converge to any $x\in\{-7,-6,-5\}$. Since $T(x_n)=-7,$ if $n$ is even, and $T(x_n)=-6,$ if $n$ is odd, and $T(x)\in \{-7,-6,-5\}$ for $x\geq 0$, and $p(T(x),T(x))\neq p(-7,T(x))$ or $p(T(x),T(x))\neq p(-6,T(x))$ for $x\geq 0$, we get that the sequence $(T(x_n))_{n\in \bn}$ does not converge to $T(x)$ for any fixed $x\geq 0$. Thus $T$ is not continuous at any $x\geq 0$.
Finally, let us also note that $x=-5$ is the only fixed point of $T$ and that the space $(U,p)$ is not a Hausdorff space.
\end{exa}

At the end of this section let us recall the following
\begin{thm}[\cite{H}]
A metric space $(X,d)$ is complete if and only if for every nonempty closed subset $Y\subset X$, every contraction on $Y$ has a fixed point in $Y$.
\end{thm}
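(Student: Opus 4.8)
The plan is to prove the two implications separately. The forward one is routine: if $(X,d)$ is complete and $Y\subseteq X$ is nonempty and closed, then $(Y,d)$ is itself a complete metric space (a closed subspace of a complete metric space is complete), so the Banach Contraction Principle gives, for every contraction $T\colon Y\to Y$, a (unique) fixed point of $T$ in $Y$. The converse carries all the content, and I will establish it by contraposition.

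So suppose $(X,d)$ is not complete; I will produce a nonempty closed set $Y\subseteq X$ and a fixed-point-free contraction $T\colon Y\to Y$. Fix a Cauchy sequence $(a_n)$ with no limit in $X$. No value is attained by infinitely many of the $a_n$ (otherwise, being Cauchy, the sequence would converge to it), so after passing to a subsequence I may assume the $a_n$ pairwise distinct; and for each $k$, since $(a_n)$ is Cauchy but does not converge to $a_k$, there is $\eta_k>0$ with $d(a_m,a_k)\ge\eta_k$ for all large $m$. Using this I would recursively extract a subsequence $(b_k)=(a_{n_k})$: having chosen $b_k=a_{n_k}$, pick $n_{k+1}$ that is larger than $n_k$, large enough that $d(a_p,a_q)<\eta_k/4$ whenever $p,q\ge n_{k+1}$, and large enough that $d(a_{n_{k+1}},b_k)\ge\eta_k$, and set $b_{k+1}:=a_{n_{k+1}}$. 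A direct check then gives, for every $k\in\bn$, that $b_k\neq b_{k+1}$ and $d(b_{k+1},b_{k+2})\le\tfrac14\,d(b_k,b_{k+1})$ (indeed $d(b_k,b_{k+1})\ge\eta_k$ by the last requirement, while $d(b_{k+1},b_{k+2})<\eta_k/4$ since $n_{k+1},n_{k+2}$ both exceed the threshold used at stage $k$). Put $Y:=\{b_k:k\in\bn\}$.

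Two properties of $Y$ remain. First, $Y$ is closed in $X$: a non-convergent Cauchy sequence has no cluster point in $X$ (a Cauchy sequence with a convergent subsequence converges), and since the $b_k$ are distinct the limit points of the set $\{b_k\}$ coincide with the cluster points of the sequence, so $Y$ has no limit point in $X$. Second, the shift $T\colon Y\to Y$, $T(b_k):=b_{k+1}$, is a contraction: writing $c_k:=d(b_k,b_{k+1})$, so that $c_{k+j}\le 4^{-j}c_k$, for $k<l$ the triangle inequality together with a geometric series gives $d(b_{k+1},b_{l+1})\le\sum_{i=k+1}^{l}c_i\le\tfrac43 c_{k+1}\le\tfrac13 c_k$, while the reverse triangle inequality gives $d(b_k,b_l)\ge c_k-\sum_{i=k+1}^{l-1}c_i\ge c_k-\tfrac43 c_{k+1}\ge\tfrac23 c_k$; hence $d(T(b_k),T(b_l))\le\tfrac12\,d(b_k,b_l)$ for all $k\neq l$. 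Since $T(b_k)=b_k$ would force $b_{k+1}=b_k$, $T$ has no fixed point in $Y$, contradicting the hypothesis; therefore $(X,d)$ must be complete.

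The main obstacle is the construction of $(b_k)$. It is not enough that the consecutive gaps $c_k$ tend to $0$, nor even that they decay geometrically with some arbitrary ratio: the ratio must be small enough — anything below $\tfrac13$, with $\tfrac14$ a convenient choice — for the triangle-inequality upper bound on $d(b_{k+1},b_{l+1})$ and the reverse-triangle lower bound on $d(b_k,b_l)$ to combine into a Lipschitz constant strictly less than $1$. Extracting such a subsequence from an arbitrary non-convergent Cauchy sequence, using only the Cauchy condition and the fact that the sequence stays bounded away from each of its own terms, is the heart of the argument; the remaining estimates are routine.
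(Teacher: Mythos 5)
Your proof is correct. Note that the paper itself offers no proof of this statement: it is quoted verbatim from Hu's 1967 note \cite{H} and used only as a stepping stone to the subsequent corollary on $0$-completeness, so there is no in-paper argument to compare against. Your argument is essentially Hu's original one: the forward direction is the Banach Contraction Principle on a closed (hence complete) subspace, and the converse extracts from a non-convergent Cauchy sequence a subsequence whose consecutive gaps decay with ratio at most $\tfrac14$, observes that the set of its terms is closed (no cluster points), and checks that the shift is a fixed-point-free contraction. You correctly identify and handle the one delicate quantitative point, namely that the decay ratio must be small enough (below $\tfrac13$) for the triangle-inequality bounds $d(b_{k+1},b_{l+1})\le\tfrac13 d(b_k,b_{k+1})$ and $d(b_k,b_l)\ge\tfrac23 d(b_k,b_{k+1})$ to combine into a Lipschitz constant $\tfrac12<1$; the extraction itself, using that a Cauchy sequence is eventually bounded away from any point to which it does not converge, is also carried out correctly.
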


Therefore, as a corollary, we get the following
\begin{thm}
A partial metric space $(U,p)$ is 0-complete if and only if for every nonempty subset $Y\subset X$ which is closed with respect to the topology $\mathcal{T}[D]$, every contraction on $Y$ has a fixed point in $Y$.
\end{thm}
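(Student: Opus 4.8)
The plan is to deduce the statement from Hu's theorem applied to the metric space $(U,D)$, using the two facts recalled in the preliminaries: that $(U,p)$ is $0$-complete if and only if $(U,D)$ is complete, and that a subset of $U$ is closed in $\mathcal{T}[D]$ precisely when it is closed in the metric $D$. The first thing I would record is that a contraction $T:Y\to Y$ in the partial-metric sense (that is, $p(T(x),T(y))\le\alpha p(x,y)$ for $x,y\in Y$ and some $0<\alpha<1$) is automatically a contraction with respect to $D|_{Y}$ with the same constant $\alpha$: for $x\neq y$ either $T(x)\neq T(y)$, and then $D(T(x),T(y))=p(T(x),T(y))\le\alpha p(x,y)=\alpha D(x,y)$, or $T(x)=T(y)$ and the left-hand side vanishes. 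Consequently, if $(U,p)$ is $0$-complete, then $(U,D)$ is a complete metric space and, for every nonempty $\mathcal{T}[D]$-closed $Y\subseteq U$ and every contraction $T:Y\to Y$, the map $T$ is a $D$-contraction on the $D$-closed set $Y$, hence has a fixed point in $Y$ by Hu's theorem. This settles one implication.

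For the converse I would argue by contraposition, so suppose $(U,p)$ is not $0$-complete; then $(U,D)$ is not complete, so there is a $D$-Cauchy sequence $(y_k)_{k\in\bn}$ with no limit in $(U,D)$. A $D$-Cauchy sequence taking finitely many values is eventually constant, hence convergent, so $(y_k)$ takes infinitely many values; after passing to a subsequence I may assume the $y_k$ are pairwise distinct, so that $D(y_k,y_l)=p(y_k,y_l)$ whenever $k\neq l$, and, passing to a further subsequence, that either $p(y_k,y_k)>0$ for every $k$ or $p(y_k,y_k)=0$ for every $k$. I would then treat these two cases separately.

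In the first case I put $\tau_j:=\sup\{p(y_l,y_{l'}):l,l'\ge j\}$. Since $p(y_l,y_l)\le p(y_{l+1},y_l)=D(y_{l+1},y_l)\to 0$ by axiom~(2) and $(y_k)$ is $D$-Cauchy, $\tau_j$ decreases to $0$, while $\tau_j\ge p(y_j,y_j)>0$. I would then choose recursively $k_1<k_2<\cdots$ with $\tau_{k_{n+1}}\le\frac13\,p(y_{k_n},y_{k_n})$ and set $x_n:=y_{k_n}$, $Y:=\{x_n:n\in\bn\}$, $T(x_n):=x_{n+1}$, and verify that $Y$ is $\mathcal{T}[D]$-closed (being the range of a $D$-Cauchy sequence of distinct terms with no $D$-limit), that $T$ has no fixed point, and that $T$ is a contraction, since, writing $j:=\min(n,m)$, one has $p(T(x_n),T(x_m))=p(x_{n+1},x_{m+1})\le\tau_{k_{j+1}}\le\frac13\,p(x_j,x_j)\le\frac13\,p(x_n,x_m)$, the last inequality again by axiom~(2). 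This contradicts the hypothesis. In the second case $\rho_p=0$, and after passing to the subsequence of those $k$ with $y_k\in U_p$ (still $D$-Cauchy with no limit) the sequence $(y_k)$ is a nonconvergent Cauchy sequence in $(U_p,p|_{U_p})$, on which $p$ and $D$ coincide, so this metric space is incomplete. By Hu's theorem there are then a nonempty $p|_{U_p}$-closed set $Z\subseteq U_p$ and a contraction $T:Z\to Z$ with no fixed point; since $\rho_p=0$ the set $U_p$ is $\mathcal{T}[D]$-closed in $U$ (if $z_n\in U_p$ and $z_n\to z$ in $D$ then $p(z,z)\le D(z_n,z)\to 0$), so $Z$ is $\mathcal{T}[D]$-closed in $U$, and $T$ is a contraction on $Z$ in the partial-metric sense because on $U_p$ the metric $p|_{U_p}$ is the restriction of $p$. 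Again the hypothesis is contradicted, which finishes the argument.

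The verifications of $\mathcal{T}[D]$-closedness and of the contraction inequalities are short; I expect the main obstacle to be the first case of the converse. There the diagonal values $p(x_n,x_n)$ are strictly positive, so a $D$-contraction handed to us by Hu's theorem on the subspace $\{y_k\}$ need not satisfy $p(T(x),T(y))\le\alpha p(x,y)$ --- the obstruction being the diagonal, and the possible identification of distinct points by $T$. Hence one has to build the shift map and the subsequence $(k_n)$ explicitly, so that the tail diameters $\tau_{k_n}$ decay faster than the diagonal values $p(y_{k_n},y_{k_n})$; this is precisely the role of the condition $\tau_{k_{n+1}}\le\frac13\,p(y_{k_n},y_{k_n})$, after which axiom~(2) upgrades the tail bound to the required contraction estimate on all pairs.
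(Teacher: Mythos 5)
Your proof is correct, but it is worth saying how it relates to the paper: the paper gives no argument at all for this theorem, presenting it as an immediate corollary of Hu's theorem via the facts that $(U,p)$ is $0$-complete iff $(U,D)$ is complete and that $\mathcal{T}[D]$-closed sets are exactly the closed sets of the metric space $(U,D)$. That reading really only disposes of one implication, the one you prove first: every contraction in the partial-metric sense is a $D$-contraction, so $0$-completeness forces fixed points. For the converse the correspondence breaks down, precisely for the reason you isolate: a $D$-contraction need not control the self-distances $p(T(x),T(x))$ (the constant map $T_b$ in the two-point space of Example 5.8 is a $D$-contraction but not a $p$-contraction), so the fixed-point-free $D$-contraction produced by Hu's theorem on an incomplete $(U,D)$ cannot simply be reused. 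Your contrapositive argument supplies exactly what is missing: the case split on whether the self-distances along the divergent $D$-Cauchy sequence are eventually positive or zero, the explicit shift map with tail diameters $\tau_{k_{n+1}}\le\frac13 p(y_{k_n},y_{k_n})$ (which, via axiom (2), upgrades the tail bound to $p(T(x_n),T(x_m))\le\frac13 p(x_n,x_m)$ including on the diagonal), and in the zero-self-distance case the reduction to Hu's theorem on the metric space $(U_p,p|_{U_p})$, where $p$ and $D$ coincide so the resulting contraction genuinely is a partial-metric contraction. All the individual verifications (closedness of $Y$ and of $U_p$ in $\mathcal{T}[D]$, the contraction estimates, the reduction to pairwise distinct terms) check out. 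In short, your write-up is not just a restatement of the paper's derivation; it is a complete proof of a statement the paper asserts without one, and it correctly identifies the point at which the ``corollary'' is not formal.
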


\section{Compactness of a partial metric space}
Let us recall 
\begin{defin}
Let $(X,\mathcal T)$ be a topological space. The space $(X,\mathcal T)$ is said to be compact if for any open cover of $X$ the cover has a finite subcover of $X$.
\end{defin}
Remark that above it is not assumed that the space $X$ satisfies some form of separation axiom. However, it is clear that any partial metric space $(U,p)$ is a $T_0$-space, that is, for any two different points $x,y\in U$, there is an open set $V$ such that either $x\in V$ and $y\notin V$, or inversely. In general, a partial metric space may not be a Hausdorff space. 
\begin{defin}
Let $(X,\mathcal T)$ be a topological space. The space $(X,\mathcal T)$ is said to be sequentially compact if any sequence of elements of $X$ possesses a convergent subsequence.
\end{defin}
In general topological space setting compactness is neither necessary nor sufficient for sequential compactness. In the case of metrizable topological spaces these notions are equivalent. 
\begin{thm}\label{ComSeqCom}
If $(U,p)$ is a compact partial metric space, then it is sequentially compact.
\end{thm}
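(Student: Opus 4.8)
The plan is to argue by contraposition, following the classical metric-space pattern; the partial metric structure is used only to exhibit a convenient neighbourhood base. First I would record that for every $x\in U$ the balls $B(x,1/k)$, $k\in\bn$, form a countable neighbourhood base at $x$ for $\mathcal{T}[p]$: if $W$ is open with $x\in W$, choose a basic ball with $x\in B(z,\eps)\subseteq W$, set $\delta:=p(z,z)+\eps-p(z,x)>0$, and use axiom (4) to see that for $y\in B(x,\delta)$ one has $p(z,y)\le p(z,x)+p(x,y)-p(x,x)<p(z,x)+\delta=p(z,z)+\eps$, so $B(x,\delta)\subseteq B(z,\eps)\subseteq W$. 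In particular $(U,\mathcal{T}[p])$ is first countable (cf.\ \cite{HWZ}), and, because $p(x,x)\le p(x_n,x)$ by axioms (2) and (3), a sequence $(x_n)$ converges to $x$ in the sense defined earlier precisely when it is eventually contained in every ball $B(x,1/k)$.

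Next, I would assume, contrary to the statement, that $(U,p)$ is compact but that some sequence $(x_n)_{n\in\bn}$ of elements of $U$ has no convergent subsequence, and derive a contradiction. Fix $x\in U$. I claim there is $\eps_x>0$ with $\{n\in\bn:\ x_n\in B(x,\eps_x)\}$ finite. Indeed, if every ball $B(x,1/k)$ contained $x_n$ for infinitely many $n$, one could pick indices $n_1<n_2<\cdots$ with $x_{n_k}\in B(x,1/k)$; then $p(x,x)\le p(x,x_{n_k})<p(x,x)+1/k$, whence $p(x,x_{n_k})\to p(x,x)$ and $(x_{n_k})$ converges to $x$, contradicting the assumption.

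Finally, the family $\{B(x,\eps_x):\ x\in U\}$ is an open cover of $U$, so compactness yields finitely many points $y_1,\dots,y_m\in U$ with $U=\bigcup_{i=1}^m B(y_i,\eps_{y_i})$. Since every $x_n$ belongs to $U$, we get $\bn=\bigcup_{i=1}^m\{n\in\bn:\ x_n\in B(y_i,\eps_{y_i})\}$, a finite union of finite sets and therefore finite — a contradiction. This proves that $(U,p)$ is sequentially compact.

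I do not expect a genuine obstacle here: the argument is essentially the metric-space proof run inside the topology $\mathcal{T}[p]$. The one place calling for a little care — and the only use of the partial metric axioms beyond pure point-set topology — is the equivalence between "eventually inside every ball $B(x,1/k)$" and the partial-metric convergence $p(x_n,x)\to p(x,x)$, which differs from the metric situation because of the offset $p(x,x)$ and hinges on the inequality $p(x,x)\le p(x_n,x)$.
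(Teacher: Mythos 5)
Your proposal is correct and follows essentially the same argument as the paper: assuming no convergent subsequence, each point admits a ball meeting the sequence only finitely often, and a finite subcover then forces the sequence to have only finitely many terms, a contradiction. The only cosmetic difference is that you track index sets directly (with an explicit verification that topological and partial-metric convergence agree), whereas the paper first passes to a subsequence with pairwise distinct terms and counts elements of the value set.
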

\begin{proof} Assume that $(U,p)$ is not sequentially compact, that is, there exists a sequence $(x_n)_{n\in \mathbb{N}}$ of elements of the compact partial metric space $(U,p)$ which has no convergent subsequence. It follows that the sequence $(x_n)_{n\in \mathbb{N}}$ does not contain constant subsequences. So we may assume, extracting a subsequence if necessary, that $x_n\neq x_k$ for $n\neq k$. Let $W:=\{x_1,x_{2},\ldots\}$. Since there is no convergent subsequence of $(x_n)_{n\in \bn}$, for any $x\in U$ there exists $r_x>0$ such that $B(x, r_x)$ contains at most a finite number of elements of $W$. The family $V_x:=B(x,r_x),\, x\in U,$ is an open cover of $U$. By compactness of $U$ there exist a finite number of points $y_i\in U$, $i=1,\ldots,m$, with $U=V_{y_1}\cup\ldots\cup V_{y_m}$, but this implies that there are at most a finite number of different values taken on by the sequence $(x_n)_{n\in \bn}$ which contradicts that the set $W$ is infinite. The proof is complete.
\end{proof}
The following lemma is crucial for further considerations.
\begin{lm}\label{SeqCompDiag}
Let $(U,p)$ be a partial metric. If the space $(U,\mathcal T[p])$ is $T_1$ and sequentially compact, then the diagonal of $U$, $\Delta:=\{(x,x):\, x\in U\}$, is a $G_\delta$-set in $U\times U$, that is, $\Delta=\bigcap_{n\in \bn} D_n$, for some family of open sets $D_n\subset U\times U,\, n\in \bn$.
\end{lm}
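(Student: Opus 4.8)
The plan is to write $\Delta$ as a countable intersection of explicit open subsets of $U\times U$ built from the balls of the partial metric. For each $n\in\bn$ put
\[
D_n:=\bigcup_{x\in U}\bigl(B(x,1/n)\times B(x,1/n)\bigr),
\]
which is open in $U\times U$ as a union of products of basic open sets. Since $p(x,x)<p(x,x)+1/n$ we have $x\in B(x,1/n)$, so $\Delta\subseteq D_n$ for every $n$; hence everything reduces to proving $\bigcap_{n\in\bn}D_n\subseteq\Delta$.

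So fix $(y,z)\in\bigcap_n D_n$ and, for each $n$, choose $x_n\in U$ with $y,z\in B(x_n,1/n)$, i.e.
\[
p(x_n,y)<p(x_n,x_n)+1/n\qquad\text{and}\qquad p(x_n,z)<p(x_n,x_n)+1/n.
\]
This is where sequential compactness enters: after passing to a subsequence (which I will not relabel) I may assume that $(x_n)$ converges in $\mathcal T[p]$ to some $w\in U$, i.e.\ $p(x_n,w)\to p(w,w)$. Applying axiom~(4) with middle point $x_n$, and using $p(y,x_n)=p(x_n,y)<p(x_n,x_n)+1/n$, gives
\[
p(y,w)\le p(y,x_n)+p(x_n,w)-p(x_n,x_n)<p(x_n,w)+1/n ;
\]
letting $n\to\infty$ yields $p(y,w)\le p(w,w)$, and since axiom~(2) gives $p(w,w)\le p(y,w)$, we obtain $p(y,w)=p(w,w)$. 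The same computation gives $p(z,w)=p(w,w)$.

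It remains to deduce $y=z$, and this is exactly where the $T_1$ hypothesis is used. From $p(y,w)=p(w,w)$ we get $y\in B(w,\varepsilon)$ for every $\varepsilon>0$, and since the balls centered at $w$ form a neighborhood base at $w$, this means that $y$ lies in every open set containing $w$; in a $T_1$ space this forces $y=w$. Likewise $z=w$, so $(y,z)=(w,w)\in\Delta$, finishing the proof. I do not anticipate a real obstacle: the only points needing care are the production of the limit $w$ (the sole role of sequential compactness — full compactness is not needed) and the observation that $T_1$ is genuinely indispensable, since otherwise one can have $y\ne w$ with $p(y,w)=p(w,w)$ and then the family $(D_n)$ need not cut out the diagonal.
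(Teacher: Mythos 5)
Your argument is correct and is essentially the paper's own proof: the same sets $D_n=\bigcup_{x\in U}B(x,1/n)\times B(x,1/n)$, the same use of sequential compactness to extract a limit $w$ of the centers $x_n$, the same triangle-inequality estimate, and the same appeal to $T_1$ to conclude $y=w=z$. The only cosmetic difference is that you pass to the limit directly to get $p(y,w)=p(w,w)$, while the paper phrases the same estimate with an explicit $\varepsilon$.
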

\begin{proof}
Let $D_n:= \bigcup_{x\in U}B(x,1/n)\times B(x,1/n),\,n\in \bn$. It is obvious that the sets $D_n,\, n\in \bn,$ are open in $U\times U$ and $\Delta\subset\bigcap_{n\in \bn} D_n$. Suppose that $(x,y)\in \bigcap_{n\in \bn} D_n$. Hence, for each $n\in \bn$, there is $x_n\in U$ such that $(x,y)\in B(x_n,1/n)\times B(x_n,1/n)$. 
Abusing notation a bit we may assume that $\lim_{n\to \infty}p(x_n,\ov{x})=p(\ov{x},\ov{x})$ for some $\ov{x}\in U$. Now, for any $\varepsilon>0$ and large $n$, we have $p(x,\ov{x})\leq p(x,x_n)+p(x_n,\ov{x})-p(x_n,x_n)<p(x_n,\ov{x})+1/n<p(\ov{x},\ov{x})+2\varepsilon$ and, similarly, $p(y,\ov{x})<p(\ov{x},\ov{x})+2\varepsilon$. Hence, $x,y\in B(\ov{x},2 \varepsilon)$ for $\varepsilon>0$. This, due to the fact that $U$ is a $T_1$ space, implies that $x=\ov{x}=y$. Thus $\Delta=\bigcap_{n\in \bn} D_n$.
\end{proof}

If we add the assumption that the partial metric space under consideration is Hausdorff we obtain the following result.
\begin{thm}\label{SeqComHausMetr}
Let $(U,p)$ be a Hausdorff partial metric space. If the space $(U,\mathcal{T}[p])$ is sequentially compact, then it is metrizable.
\end{thm}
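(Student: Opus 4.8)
The plan is to reduce the statement to two classical metrization facts, the bridge between them being the $G_\delta$-diagonal supplied by Lemma~\ref{SeqCompDiag}. Since $(U,\mathcal{T}[p])$ is Hausdorff it is in particular a $T_1$ space, so the hypotheses of Lemma~\ref{SeqCompDiag} are met and the diagonal $\Delta=\{(x,x):x\in U\}$ is a $G_\delta$-subset of $U\times U$ for the product topology $\mathcal{T}[p]\times\mathcal{T}[p]$ (the witnessing sets $D_n=\bigcup_{x\in U}B(x,1/n)\times B(x,1/n)$ are open there, being unions of products of basic open balls). Thus we are dealing with a Hausdorff space that has a $G_\delta$-diagonal, and it remains to exploit the compactness-type hypothesis.

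Next I would upgrade sequential compactness to genuine compactness. First I would recall the elementary general-topology fact that a sequentially compact space is countably compact: if $\{V_k\}_{k\in\bn}$ were a countable open cover admitting no finite subcover, picking $x_k\notin V_1\cup\cdots\cup V_k$ produces a sequence any convergent subsequence of which would eventually lie outside a member of the cover containing its limit — a contradiction. Hence $(U,\mathcal{T}[p])$ is countably compact, and being countably compact with a $G_\delta$-diagonal it is compact by Chaber's theorem.

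Finally, $(U,\mathcal{T}[p])$ is a compact Hausdorff space with a $G_\delta$-diagonal, and every such space is metrizable by the classical metrization theorem (due to \v{S}nejder); equivalently, such a space is second countable, hence metrizable by Urysohn's theorem. This is the assertion.

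I expect the genuine obstacle to be precisely the passage from the sequential (equivalently countable) compactness to honest compactness, i.e.\ the step invoking Chaber's theorem; the remainder is bookkeeping together with citations of standard results. One could attempt a self-contained route avoiding Chaber's theorem by showing directly, from sequential compactness, that for each $n$ the open cover $\{B(x,1/n):x\in U\}$ admits a countable (or finite) refinement $\mathcal{U}_n$ with $\bigcup_{W\in\mathcal{U}_n}W\times W\subseteq D_n$, so that $\bigcup_{n}\mathcal{U}_n$ is a countable base; but controlling these asymmetric, non-nested partial-metric balls is exactly the delicate issue already confronted in the proof of Lemma~\ref{SeqCompDiag}, so quoting Chaber's and \v{S}nejder's theorems looks like the cleanest way to finish. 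Alternatively, once the equivalence of compactness and sequential compactness for partial metric spaces is at hand, the compactness of $(U,\mathcal{T}[p])$ is automatic and only \v{S}nejder's theorem is needed.
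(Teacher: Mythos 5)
Your proof is correct and follows essentially the same route as the paper: Hausdorff gives $T_1$, Lemma~\ref{SeqCompDiag} gives the $G_\delta$-diagonal, sequential compactness gives countable compactness, and the metrization is then read off from Chaber's results (the paper cites a single corollary of \cite{CH} where you split the citation into Chaber's compactness theorem plus \v{S}nejder's metrization theorem, which is only a cosmetic difference).
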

\begin{proof}
It is clear that $(U,\mathcal T[p])$ is a $T_1$ space as it is a Hausdorff topological space. By Lemma \ref{SeqCompDiag}, the diagonal $\Delta$ of $U$ is a $G_\delta$--set in $U\times U$. Moreover, since $(U,\mathcal{T}[p])$ is a sequentially compact space, it is a countably compact space \cite[p. 162]{K}. 
By \cite[Corollary 2.A]{CH}, the space $U$ is metrizable.
\end{proof}
By Theorems \ref{ComSeqCom} and \ref{SeqComHausMetr} we additionally have
\begin{cor}\label{H:1} Let $(U,p)$ be a Hausdorff partial metric space. If the space $(U,\mathcal{T}[p])$ is compact, then it is metrizable.
\end{cor}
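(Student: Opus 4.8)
The plan is to simply chain the two preceding results, since the corollary is an immediate consequence of Theorems \ref{ComSeqCom} and \ref{SeqComHausMetr}. Concretely, I would start from the hypothesis that $(U,\mathcal{T}[p])$ is compact. Theorem \ref{ComSeqCom} is stated for a compact partial metric space and yields sequential compactness with no additional separation hypothesis, so I may apply it directly to conclude that $(U,\mathcal{T}[p])$ is sequentially compact.

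Next I would invoke the Hausdorff assumption, which is carried over unchanged into Theorem \ref{SeqComHausMetr}. That theorem asserts that a Hausdorff, sequentially compact partial metric space is metrizable. Since $(U,p)$ is Hausdorff by hypothesis and sequentially compact by the previous step, Theorem \ref{SeqComHausMetr} applies and gives that $(U,\mathcal{T}[p])$ is metrizable, which is exactly the assertion to be proved.

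There is no real obstacle here: all the substantive work (the $G_\delta$-diagonal argument of Lemma \ref{SeqCompDiag}, the passage through countable compactness, and the appeal to the metrization criterion of \cite{CH}) has already been absorbed into Theorem \ref{SeqComHausMetr}, and the implication ``compact $\Rightarrow$ sequentially compact'' is precisely Theorem \ref{ComSeqCom}. The only thing to verify is that the hypotheses line up, i.e.\ that compactness of $(U,\mathcal{T}[p])$ is exactly what Theorem \ref{ComSeqCom} needs and that ``Hausdorff'' plus the resulting ``sequentially compact'' are exactly what Theorem \ref{SeqComHausMetr} needs; both checks are immediate. Accordingly the proof is a two-line deduction.
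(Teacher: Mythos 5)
Your proposal is correct and is precisely the paper's own argument: the corollary is stated as an immediate consequence of Theorems \ref{ComSeqCom} and \ref{SeqComHausMetr}, chained exactly as you describe. Nothing further is needed.
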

\begin{cor}\label{H:2a} Let $(U,p)$ be a compact partial metric space. The space $(U,\mathcal{T}[p])$ is metrizable if and only if it is a Hausdorff space.
\end{cor}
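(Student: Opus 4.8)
The plan is to prove the two implications separately, both of which are essentially immediate given the results already established. Note first that ``metrizable'' here refers to the topology $\mathcal{T}[p]$: we are asking whether there exists \emph{some} metric $d$ on $U$ with $\mathcal{T}[d]=\mathcal{T}[p]$ (the partial metric $p$ itself need not become a metric). With that understood, the ``if'' direction is nothing but a restatement of Corollary \ref{H:1}: if $(U,\mathcal{T}[p])$ is a compact Hausdorff partial metric space, then it is metrizable. So for that half there is literally nothing to add beyond citing the corollary just proved.

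For the ``only if'' direction I would invoke the standard topological fact that every metrizable space is Hausdorff, specialized to the present setting. Concretely, suppose $\mathcal{T}[p]=\mathcal{T}[d]$ for some metric $d$ on $U$. Given distinct $x,y\in U$ we have $d(x,y)>0$, and the $d$-balls $B_d(x,d(x,y)/2)$ and $B_d(y,d(x,y)/2)$ are disjoint and open in $\mathcal{T}[d]=\mathcal{T}[p]$; they separate $x$ and $y$, so $(U,\mathcal{T}[p])$ is Hausdorff. This uses nothing about compactness or the partial metric structure.

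I do not expect any real obstacle: the content of the corollary is simply the packaging of Corollary \ref{H:1} (whose nontrivial content, via Theorem \ref{SeqComHausMetr}, Lemma \ref{SeqCompDiag} and the cited metrization criterion, supplies the hard ``Hausdorff $\Rightarrow$ metrizable'' implication under compactness) together with the trivial ``metrizable $\Rightarrow$ Hausdorff'' implication. The only point worth stating carefully in the writeup is that the compactness hypothesis is needed only for the forward direction, and that ``metrizable'' is a statement about the topology rather than about $p$ being a metric.
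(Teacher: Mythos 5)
Your proposal is correct and matches the paper's intended argument exactly: the paper states this as an immediate corollary of Corollary \ref{H:1} (compact Hausdorff $\Rightarrow$ metrizable), with the converse direction being the standard fact that every metrizable space is Hausdorff. Nothing is missing.
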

We are now in position to prove that the sequential compactness of a partial metric space implies its compactness without assuming that the space is Hausdorff. To this end we need 
\begin{lm}\label{lm:bound}
Let $(U,p)$ be a sequentially compact partial metric space. Then $U$ is bounded, that is, $\sup\{p(x,y):\, x,y\in U\}<+\infty$.
\end{lm}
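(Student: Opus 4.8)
The plan is to reduce boundedness of $p$ on all of $U\times U$ to boundedness of $p(x_0,\cdot)$ for a single point $x_0\in U$, and then to exploit sequential compactness. First I would fix an arbitrary $x_0\in U$ and observe that, by axiom~(4) applied with $z=x_0$ together with $p(x_0,x_0)\ge 0$, one has $p(x,y)\le p(x,x_0)+p(x_0,y)$ for all $x,y\in U$. Consequently, if $M:=\sup\{p(x_0,y):\,y\in U\}$ is finite, then $\sup\{p(x,y):\,x,y\in U\}\le 2M<+\infty$. Thus it suffices to prove $M<+\infty$.

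To prove the latter I would argue by contradiction. Assuming $M=+\infty$, pick a sequence $(y_n)_{n\in\bn}$ in $U$ with $p(x_0,y_n)\to+\infty$. By sequential compactness there is a subsequence $(y_{n_k})_{k\in\bn}$ convergent to some $y\in U$, that is, $\lim_{k\to\infty}p(y_{n_k},y)=p(y,y)$; of course $p(x_0,y_{n_k})\to+\infty$ still holds along this subsequence.

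The final step is one more application of axiom~(4), this time in the form $p(x_0,y_{n_k})\le p(x_0,y)+p(y,y_{n_k})-p(y,y)$. Letting $k\to\infty$, the right-hand side tends to $p(x_0,y)$, so $\limsup_{k\to\infty}p(x_0,y_{n_k})\le p(x_0,y)<+\infty$, contradicting $p(x_0,y_{n_k})\to+\infty$.

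I do not expect a serious obstacle here. The only subtle point is that convergence in $\mathcal T[p]$ controls $p(y_{n_k},y)$ but not $p(y_{n_k},y_{n_k})$, which is harmless since the partial-metric triangle inequality with $z=y$ already does the job; the conceptual heart of the matter is simply that sequential compactness prevents the values $p(x_0,y_n)$ from escaping to infinity, because the $y_n$ must accumulate at a point near which $p(x_0,\cdot)$ is bounded.
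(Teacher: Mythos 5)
Your proof is correct and follows essentially the same route as the paper's: argue by contradiction, use sequential compactness to extract a convergent subsequence, and use the partial-metric triangle inequality together with $\lim_k p(y_{n_k},y)=p(y,y)$ to cap the supposedly escaping values. The only cosmetic difference is that you first reduce to boundedness of $p(x_0,\cdot)$ so that a single subsequence extraction suffices, whereas the paper works with both coordinates at once, extracting convergent subsequences of $(x_n)$ and $(y_n)$ and passing the triangle inequality through both limit points.
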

\begin{proof}
Suppose that $x_n,y_n\in U,\,n\in \bn$, satisfy $\lim_{n\to\infty}p(x_n,y_n)=+\infty$. Without loss of generality we may assume that $\lim_{n\to\infty}p(x_n,x)=p(x,x)$ and $\lim_{n\to\infty}p(y_n,y)=p(y,y)$ for some $x,y\in U$. This gives, for $n\in \bn$, $p(x_n,y_n)\leq p(x_n,x)+p(x,y)+p(y,y_n)-p(x,x)-p(y,y)$ and taking the limit $n\to\infty$ we get $+\infty=\lim_{n\to\infty}p(x_n,y_n)\leq p(x,y)$ which is impossible. The claim follows.

\end{proof}
\begin{thrm}\label{thm:seqcompcomp}
If $(U,p)$ is a sequentially compact partial metric space, then it is compact.
\end{thrm}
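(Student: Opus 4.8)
The plan is to verify the covering definition directly via contradiction: assume $\mathcal U$ is an open cover of $U$ admitting no finite subcover, and build, using sequential compactness, a sequence whose accumulation point yields an absurdity. Two preliminary observations drive everything. First, the self‑distance map $\phi\colon U\to\mathbb R_+$, $\phi(x):=p(x,x)$, is sequentially upper semicontinuous: if $x_n\to x$ in $\mathcal T[p]$ then, by axiom (2), $\phi(x_n)=p(x_n,x_n)\le p(x_n,x)$, whence $\limsup_n\phi(x_n)\le\lim_n p(x_n,x)=\phi(x)$. Hence $\phi$ is bounded on every sequentially compact set (an unbounded sequence of $\phi$‑values would force $\phi$ of a subsequential limit to be infinite, which is impossible; boundedness of $U$ is in any case available from Lemma~\ref{lm:bound}) and, being sequentially u.s.c., it attains its supremum on every nonempty closed, hence sequentially compact, subset of $U$. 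Second, $\mathcal U$ has a Lebesgue number $\delta>0$: otherwise for each $n$ there is $y_n$ with $B(y_n,1/n)$ lying in no member of $\mathcal U$; passing to $y_{n_k}\to y$, picking $V\in\mathcal U$ and $r>0$ with $B(y,r)\subseteq V$, and using axiom (4) with $p(y_{n_k},y)\to\phi(y)$ and $1/n_k\to 0$ one gets $B(y_{n_k},1/n_k)\subseteq B(y,r)\subseteq V$ for large $k$, a contradiction.

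Next I carry out a greedy construction designed to make the self‑distances decrease. Put $U_0:=U$; given the closed nonempty set $U_{m-1}$, choose $x_m\in U_{m-1}$ attaining $t_m:=\max\{\phi(z):z\in U_{m-1}\}$, and set $U_m:=U_{m-1}\setminus B(x_m,\delta)$, again closed. If some $U_m$ were empty then $U=\bigcup_{i\le m}B(x_i,\delta)$, and replacing each $B(x_i,\delta)$ by a member of $\mathcal U$ containing it (Lebesgue number) would give a finite subcover; so every $U_m$ is nonempty and the recursion never stops. By construction $(t_m)$ is nonincreasing and bounded below, so $t_m\downarrow t$ for some $t\ge\rho_p$, and for every $i\le m$ the point $x_{m+1}\in U_m\subseteq U\setminus B(x_i,\delta)$ satisfies $p(x_i,x_{m+1})\ge\phi(x_i)+\delta=t_i+\delta$.

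To conclude, extract by sequential compactness a convergent subsequence $x_{m_k}\to x$ in $\mathcal T[p]$. Then $\phi(x_{m_k})=t_{m_k}\to t$, and upper semicontinuity gives $\phi(x)\ge t$. The key point is that $x$ is covered by none of the balls $B(x_i,\delta)$: for a fixed $m$, as soon as $m_k\ge m+1$ we have $x_{m_k}\in U_{m_k-1}\subseteq U_m$, and $U_m$ is closed, so $x\in U_m$; thus $x\in\bigcap_m U_m$. From $x\in U_m$ we read off, on one hand, $\phi(x)\le\sup\phi|_{U_m}=t_{m+1}$ for all $m$, hence $\phi(x)\le t$; on the other hand, $x\notin B(x_{m_k},\delta)$ for every $k$, hence $p(x_{m_k},x)\ge t_{m_k}+\delta$. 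Letting $k\to\infty$ in the latter and using $p(x_{m_k},x)\to\phi(x)$ yields $\phi(x)\ge t+\delta$, contradicting $\phi(x)\le t$. Therefore $\mathcal U$ must have a finite subcover, and $(U,p)$ is compact.

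I expect the real obstacle to be finding this construction rather than checking any one step. The naive routes all fail: one cannot pass to $(U,p^m)$ (a sequentially compact partial metric space need not be separable or totally bounded in $p^m$), and one cannot invoke the $G_\delta$‑diagonal/Chaber argument of Theorem~\ref{SeqComHausMetr} because $(U,\mathcal T[p])$ need not be $T_1$ — indeed Lemma~\ref{SeqCompDiag} genuinely uses $T_1$. The reason the usual "$\varepsilon$‑net" argument is powerless is that in a partial metric space a point may have very small self‑distance while being $p$‑far from other points, so a net‑escaping sequence can still $\mathcal T[p]$‑converge. Running the escape construction while \emph{maximizing} $\phi$ at each step is exactly what neutralizes this: sequential upper semicontinuity of $\phi$ pins the self‑distances $t_m$ of the escaping points to a single limit $t$, and traps the accumulation point $x$ inside every $U_m$, where its self‑distance is forced to be both $\le t$ and $\ge t+\delta$.
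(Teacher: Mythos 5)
Your proof is correct, and it takes a genuinely different route from the paper's. The paper reduces the problem to a $T_1$ subspace: it introduces the order $x\succeq y\Leftrightarrow y\in\bigcap_{\varepsilon>0}B(x,\varepsilon)$, shows (also by maximizing self-distance, but locally, over the sets $W_x=\{y:\,y\succeq x\}$) that every point lies below a $\succeq$-maximal point, proves that the set $\widehat U$ of maximal points is a $T_1$ sequentially compact subspace whose balls cover $U$, and then obtains compactness of $\widehat U$ from Lemma~\ref{SeqCompDiag} together with Chaber's theorem on countably compact spaces with a $G_\delta$ diagonal. You instead stay inside $U$ and argue directly on covers: you first prove a Lebesgue-number lemma for sequentially compact partial metric spaces (your estimate $p(y,z)\le p(y,y_{n_k})+1/n_k\to p(y,y)$ is exactly the right replacement for the symmetric metric estimate that is unavailable here), and then run the greedy exhaustion $U_m=U_{m-1}\setminus B(x_m,\delta)$ in which $x_m$ maximizes the sequentially upper semicontinuous self-distance $\phi(\cdot)=p(\cdot,\cdot)$ on the closed set $U_{m-1}$; any accumulation point $x$ of $(x_m)$ lands in $\bigcap_m U_m$, forcing $\phi(x)\le t=\lim_m t_m$ while the escape condition forces $p(x_{m_k},x)\ge t_{m_k}+\delta\to t+\delta=\phi(x)+\delta$, a contradiction. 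I checked the individual steps --- upper semicontinuity of $\phi$ along convergent sequences and attainment of its maximum on closed (hence sequentially compact) subsets, closedness of the $U_m$, the containment $B(y_{n_k},1/n_k)\subseteq B(y,r)$, and the final limit computation --- and they all hold. What your approach buys is self-containment and elementarity: no appeal to Chaber's theorem, no detour through $T_1$ subspaces, and a Lebesgue-number lemma of independent interest as a byproduct. What the paper's approach buys is structural information (the compact skeleton $\widehat U$ of $\succeq$-maximal points controls every open cover of $U$) and a direct link to the metrization machinery already used in Theorem~\ref{SeqComHausMetr}.
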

\begin{proof} Let us define the binary relation $\succeq$ on $U$ as follows: for $x,y\in U$ $$x\succeq y \Leftrightarrow \forall_{\varepsilon>0}\, y\in B(x,\varepsilon).$$ 
The relation is reflexive, transitive, and antisymmetric (cf. Definition 3.3 and Theorem 3.4 in \cite{M}). 

Observe that $x\succeq y$, $x\neq y$, imply $p(x,y)=p(x,x)>p(y,y)$, $x,y\in U$.

Let us now prove that for each $x\in U$ there is an element $\hat{x}\in U$ with $\hat{x}\succeq x$ and such that there is no $y\in U, \, y\neq \hat{x},$ satisfying $y\succeq \hat{x}$. Let $W_x:=\{y\in U:\, y\succeq x\}$. Since $W_x\subset U$ and $U$ is a sequentially compact space, Lemma \ref{lm:bound} implies that $p_x:=\sup\{p(y,y):\, y\in W_x\}<+\infty$. By the sequential compactness of $U$ there exists a sequence $(x_n)_{n\in \bn}$, $x_n\in W_x,\, n\in \bn,$ converging to some $\hat{x}\in U$ with $\lim_{n\to\infty}p(x_n,x_n)=p_x$. By the convergence of $(x_n)_{n\in\bn}$ to $\hat{x}$, $\lim_{n\to\infty} p(x_n,\hat{x})=p(\hat{x},\hat{x})$ and, due to the inequality $p(x_n,x_n)\leq p(x_n,\hat{x})$, we see that $p_x\leq p(\hat{x},\hat{x})$. Again by the convergence, for any $\varepsilon>0$, we have, for large $n$, $x_n\in B(\hat{x},\, \varepsilon)$. From this observation we deduce that $x\in B(x_n,1/n)\subset B(\hat{x},\varepsilon)$ for large $n$. Hence, $\hat{x}\succeq x$, $\hat{x}\in W_x$ and $p(\hat{x},\hat{x})=p_x$. Now, if $y\neq\hat{x}$ and $y\succeq \hat{x}$, then $p(\hat{x},\hat{x})<p(\hat{x},y)=p(y,y)$, but $\hat{x}\succeq x$ gives $y\succeq x$ and, subsequently, $y\in W_x$ and $p_x<p(y,y)$ which contradicts the definition of $p_x$. Thus, for each $x\in U$, there exists (not necessarily unique) $\hat{x}\in U$ such that 
\begin{equation}\label{eq:relation}\hat{x}\succeq x \text{ and for no }y\in U,\, y\neq \hat{x}, \text{ it holds } y\succeq \hat{x}.
\end{equation}

Let now $$\widehat{U}:=\{\hat{x}\in U:\,\hat{x}\text{ meets the condition \pref{eq:relation} for some }x\in U\}.$$ By the property \pref{eq:relation} we obtain that, for each $\varepsilon>0$, the collection of balls $B(\hat{x},\varepsilon),\, \hat{x}\in \widehat{U}$, is an open cover of $U$. Let $\hat{p}:=p|_{\widehat{U}}$. It is clear that the pair $(\widehat{U},\hat{p})$ is a partial metric space with the natural (subspace) topology. By the definition of elements of $\widehat U$ it follows that if $\hat{x},\hat{y}\in \widehat U\, (\subset U)$ and $\hat x\neq \hat y$, then there exists $\varepsilon>0$ for which $\hat x\notin B(\hat y,\varepsilon)$ and $\hat y\notin B(\hat x,\varepsilon)$. Hence, the space $(\widehat U,\mathcal T[\hat p])$ is a $T_1$ topological space. We shall show that the space $\widehat{U}$ is a sequentially compact partial metric space. Let $(x_n)_{n\in \bn}\in \widehat{U}$ be a sequence of elements of $\widehat{U}$. Since $\widehat{U}\subset U$ and $U$ is a sequentially compact space, we may assume without loss of generality that $\lim_{n\to\infty}p(x_n,x)=p(x,x)$ for some $x\in U$. 
But $x\in U$, so there exists $\hat{x}\in \widehat{U}$ for which the condition \pref{eq:relation} is satisfied. Notice that, for each $\varepsilon>0$, $x\in B(\hat{x},\varepsilon)$ and, for any positive $\delta <p(\hat{x},\hat{x})+\varepsilon-p(\hat{x},x)$ and large $n$, we have $x_n\in B(x,\delta)\subset B(\hat{x},\varepsilon)$ which implies $\lim_{n\to\infty}\hat{p}(x_n,\hat{x})=\hat{p}(\hat{x},\hat{x})$. We have just proved that any sequence of elements of $\widehat{U}$ has a subsequence that converges to some element of $\widehat{U}$. 
Thus the space $(\widehat{U},\hat{p})$ is a sequentially compact partial metric space which is also $T_1$ as a topological space. Thus, by Lemma \ref{SeqCompDiag}, the diagonal of the space is a $G_\delta$--set. Since sequential compactness implies countable compactness, then, by \cite[Corollary 2.A]{CH}, the topological space $(\widehat{U},\mathcal{T}[\hat{p}])$ is compact. 

Let now $V_\alpha,\, \alpha\in I,$ be any open cover of $U$. Since $\widehat{U}\subset U$, for each $x\in \widehat{U}$, there is $\alpha_x\in I$ with $x\in V_{\alpha_x}$. By the definition of elements of the set $\widehat{U}$ the collection $V_{\alpha_x},\, x\in \widehat{U},$ openly covers both $\widehat{U}$ and $U$. Since the space $(\widehat{U},\mathcal{T}[\hat{p}])$ is compact and it is endowed with the natural subspace topology of $(U,\mathcal{T}[p])$, there are a finite number of points $x_1,\ldots,x_n,\, n\in \bn$, with $\widehat{U}\subset V_{x_1}\cup\ldots\cup V_{x_n}$. But this implies that $U\subset V_{x_1}\cup\ldots\cup V_{x_n}$.
 
\end{proof}

As we know, a metric space is compact if and only if it is totally bounded and complete. However, one can indicate a compact partial metric space which is not complete.

\begin{exa}\label{ex:4:4}
Let $U:=(0,1]$ and, for all $x,y\in U$, $p(x,y):=\max\{x,y\}$. Then $(U,p)$ is a compact partial metric space which is not complete.

Indeed, for any open cover $\{O_\lambda\}_{\lambda\in \Lambda}$ of $U$, there exist $\lambda_0\in \Lambda$ and $\varepsilon_0>0$ such that
$$
1\in \{y\in U:\, p(1,y)<p(1,1)+\varepsilon_0\}\subset O_{\lambda_0}.
$$
For any $y\in U$, we also have $p(1,y)=1$, so $U\subset \{y:\, p(1,y)<p(1,1)+\varepsilon_0\}\subset O_{\lambda_0}$. Since $\{O_\lambda\}_{\lambda\in \Lambda}$ is arbitrary, it follows that $(U,p)$ is compact.

Because $p^m(x,y)=2p(x,y)-p(x,x)-p(y,y)=|x-y|$, it is obvious that $(U,p^m)$ is not complete, so $(U,p)$ is not complete.
\end{exa}

\begin{defin}\label{df:4:5}
Let $(U,p)$ be a partial metric space. We call $(U,p)$ $p$-totally bounded if, for any $\varepsilon>0$, there exist $x_1,x_2,\ldots, x_n\in U,\, n\in \bn,$ such that the collection of open balls $B(x_i,\varepsilon)$, $i=1,2,\ldots,n$, is a cover of $U$.
\end{defin}
If $(U,p)$ is a metric space which is $p$-totally bounded, then any subset of $U$ is also $p$-totally bounded. This property is not preserved even for compact partial metric spaces (cf. Question 8.7 in \cite{HWZ}).
\begin{prop}
In the class of compact partial metric spaces, $p$-total boundedness is non--hereditary, that is, there exists a compact partial metric space $(U,p)$ which is $p$-totally bounded and there is a subspace $(X,p|_X)$ of $(U,p)$, $X\subset U$, which is not $p|_X$-totally bounded.
\end{prop}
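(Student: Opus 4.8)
The plan is to exhibit an explicit example rather than argue abstractly. Observe first that in a compact partial metric space the balls $B(x,\varepsilon)$, $x\in U$, form an open cover (each $x$ lies in its own $\varepsilon$-ball), so a finite subcover exists; hence \emph{compactness already forces $p$-total boundedness}, and the only thing needing a witness is the failure of heredity. I would take $U:=\bn\cup\{\omega\}$, where $\omega$ is a symbol not belonging to $\bn$ (so that $(U,p)$ is a sort of ``partial-metric one-point compactification'' of a discrete metric space), and define $p\colon U\times U\to\br_+$ by
\[
p(n,n):=0\ (n\in\bn),\qquad p(n,m):=1\ (n,m\in\bn,\ n\neq m),\qquad p(\omega,x):=p(x,\omega):=1\ (x\in U);
\]
in particular $p(\omega,\omega)=1$. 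The first step is to check that $p$ is a partial metric: conditions (1)--(3) of the definition are immediate (since $p$ takes only the values $0$ and $1$, the equality $p(x,x)=p(x,y)=p(y,y)$ can hold only when $x=y$), and condition (4) is verified by running through the few cases according to how many of $x,y,z$ equal $\omega$, each time bounding the right-hand side below by using that $0\le p\le 1$.

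Next I would establish compactness of $(U,\mathcal{T}[p])$. Since $p(\omega,x)=1=p(\omega,\omega)$ for every $x\in U$, we get $B(\omega,\varepsilon)=U$ for every $\varepsilon>0$. Hence, given any open cover of $U$, the member of the cover containing $\omega$ is an open set, so it contains some basic ball $B(\omega,\varepsilon)=U$; that single member already covers $U$. Thus $(U,p)$ is compact, and therefore (or directly, via the same ball $B(\omega,\varepsilon)=U$) it is $p$-totally bounded.

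Finally I would examine the subspace $X:=\bn$ with the partial metric $p|_X$, for which $p|_X(n,n)=0$ and $p|_X(n,m)=1$ whenever $n\neq m$. For every $\varepsilon$ with $0<\varepsilon\le 1$ and every $n\in\bn$, the ball of $(X,p|_X)$ centered at $n$ equals $\{m\in\bn:\,p(n,m)<p(n,n)+\varepsilon\}=\{n\}$, because $p(n,m)=1\ge\varepsilon$ for $m\neq n$. So each $\varepsilon$-ball in $X$ is a singleton, and since $\bn$ is infinite no finite family of such balls can cover $X$; therefore $(X,p|_X)$ is not $p|_X$-totally bounded, which is exactly the assertion of the proposition.

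There is no deep obstacle here; the only mildly laborious point is the case-by-case check of the triangle inequality (4) for $p$, and the only subtlety is that $p$-total boundedness of the subspace must be tested with balls computed inside $(X,p|_X)$ — which here amounts to intersecting the $U$-balls with $X$ — rather than with balls of the ambient space $U$.
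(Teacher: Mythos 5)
Your proposal is correct and follows essentially the same construction as the paper: the paper adjoins a point $a$ with $p(a,a)=p(a,x)=2$ to an arbitrary non-compact discrete metric space, so that every ball around the new point is all of $U$ while the discrete subspace fails to be totally bounded; your $\omega$ with $p(\omega,\omega)=p(\omega,x)=1$ over $\bn$ is the same idea in a concrete instance. All your verifications (partial metric axioms, compactness via $B(\omega,\varepsilon)=U$, singleton balls in the subspace) are sound.
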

\begin{proof}
Let $(X,d)$ be any non-compact metric space with the discrete metric $d$: $d(x,y):=1,\, x\neq y,\, d(x,x):=0,\, x,y\in X$. Let $a:=\{X\}$, so that $a\notin X$. Define $U:=X\cup \{a\}$ and let $p:U\to \br_+$ be given by $p(x,y):=d(x,y),\,x,y\in X$, $p(x,a):=p(a,x):=2,\,x\in X$, $p(a,a):=2$. One can check that the pair $(U,p)$ is a partial metric space. The space $(U,p)$ is compact because for any $\varepsilon\in (0,1)$ we have $U\subset B(a,\varepsilon)$ and $a\notin B(x,\varepsilon),\, x\in X$. Observe that the subspace $X\subset U$ is not $p|_X$-totally bounded. Let us also notice that the only Cauchy sequences in the space $(U,p)$ (or $(X,p|_X)$) are those which are eventually constant.
\end{proof}

We know that if $(x_n)_{n\in \mathbb{N}}$ is a sequence of elements of a metric space $(X,d)$ and $d(x_n,x_m)\geq \varepsilon_0$ for $n,m \in \mathbb N, n\neq m,$ for some $\varepsilon_0>0$, then $(x_n)_{n\in \mathbb{N}}$ does not have a convergent subsequence. However, it is not true for partial metric spaces, which can be illustrated by the following

\begin{exa}\label{ex:PMSnotTB}
Let $U=\{0\}\cup \mathbb N $ and let us define $p(n,m)$ by the formula
\begin{eqnarray*}
p(n,m) =\left\{
\begin{array}{lll}
  1+\frac{1}{n}+\frac{1}{m}, &  \rm{if}  \ &n,m\in\mathbb N,\ n\neq m,  \\
  1, & \rm{if} \  &n=m\in U,\\
	1+\frac1n, & \rm{if} \ & m=0,\ n\in\mathbb N,\\
  1+\frac1m, & \rm{if} \ & n=0,\ m\in\mathbb N.
\end{array}
\right.
\end{eqnarray*}
We have $p(n,m)>1$ for all $n,m\in U$ with $n\neq m$, but $\lim_{n\to \infty}p(n,0)=p(0,0)$, that is $(n)_{n\in \mathbb N}$ converges to $0\in U$, as $n\to \infty$.
\end{exa}

\section{$U_p$ and fixed point theorems}

The initial point of this section is the following 
\begin{thm}[cf. \cite{IPR}]\label{thm:1}
Let $(U,p)$ be a complete partial metric space. Let $T:U\to U$ be a mapping satisfying for all $x,y\in U$ the following condition
\begin{equation}\label{eqn:1}p(T(x),T(y))\leq \max\{\alpha p(x,y),p(x,x),p(y,y)\},\end{equation}
where $\alpha \in [0,1)$ is fixed. Then there exists a unique point $\ov{x}\in U_p$ for which $T(\ov{x})=\ov{x}$. Moreover, $\lim_{n\to\infty}p(T^n(x),\ov{x})=p(\ov{x},\ov{x})=\lim_{m,n\to\infty}p(T^n(x),T^m(x))$ for any $x \in U_p$, and $T(U_p)\subset U_p$, and the mapping $T$ is continuous at any $x\in U_p$.
\end{thm}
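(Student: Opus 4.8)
\medskip
\noindent\emph{Proof strategy.}
The plan is to reduce everything to the bottom set $U_p$, on which the statement becomes the classical Banach Contraction Principle. Taking $y=x$ in \eqref{eqn:1} gives $p(T(x),T(x))\le p(x,x)$ for all $x\in U$, and since $p(x,x)\ge\rho_p$ always, this already shows $T(U_p)\subset U_p$ once we know $U_p\neq\emptyset$. On $U_p$ every self-distance equals $\rho_p$, so for $x,y\in U_p$ condition \eqref{eqn:1} reads $p(T(x),T(y))\le\max\{\alpha p(x,y),\rho_p\}$; writing $p(x,y)=\rho_p+\ov{p}(x,y)$ with $\ov{p}(x,y)\ge 0$ one gets $\ov{p}(T(x),T(y))\le\alpha\ov{p}(x,y)$. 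As $\ov{p}$ restricted to $U_p\times U_p$ is a metric, $T|_{U_p}$ is an $\alpha$-contraction of $(U_p,\ov{p}|_{U_p\times U_p})$, and this metric space is complete: on $U_p$ one has $p^m=2\ov{p}$, so a $\ov{p}$-Cauchy sequence in $U_p$ is $p^m$-Cauchy, hence $p^m$-convergent by completeness of $(U,p)$ (Proposition~\ref{thm:3:1}), with limit still of self-distance $\rho_p$ because $x\mapsto p(x,x)$ is $1$-Lipschitz for $p^m$ (indeed $p^m(x,y)\ge|p(x,x)-p(y,y)|$). Granting $U_p\neq\emptyset$, the Banach Contraction Principle applied to $T|_{U_p}$ then yields the unique $\ov{x}\in U_p$ with $T(\ov{x})=\ov{x}$ and, for $x\in U_p$, the convergences $\ov{p}(T^n(x),\ov{x})\to 0$ and $\ov{p}(T^n(x),T^m(x))\to 0$, i.e.\ $p(T^n(x),\ov{x})\to\rho_p=p(\ov{x},\ov{x})=\lim_{m,n}p(T^n(x),T^m(x))$; and continuity of $T$ at any $x\in U_p$ is immediate, since if $p(z_k,x)\to p(x,x)=\rho_p$ then also $p(z_k,z_k)\to\rho_p$, whence by \eqref{eqn:1}, $p(T(z_k),T(x))\le\max\{\alpha p(z_k,x),p(z_k,z_k),\rho_p\}\to\rho_p=p(T(x),T(x))$ (using $T(x)\in U_p$), and $(U,\mathcal T[p])$ is a sequential space.

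So the real content is $U_p\neq\emptyset$, and here \eqref{eqn:1} must genuinely be exploited: completeness alone does not force $U_p\neq\emptyset$ (e.g.\ $p(x,y)=\tfrac12(|x-y|+\varphi(x)+\varphi(y))$ on $\br$ with $\varphi(x)=(1+|x|)^{-1}$ is a complete partial metric with $U_p=\emptyset$). Fix any $w\in U$ and put $x_n:=T^n(w)$. Then $c_n:=p(x_n,x_n)$ is non-increasing, say $c_n\downarrow c$. From \eqref{eqn:1}, $p(x_{n+1},x_{n+2})\le\max\{\alpha p(x_n,x_{n+1}),c_n\}$, while $p(x_n,x_{n+1})\ge c_n$; since $\alpha<1$ this forces $p(x_n,x_{n+1})\to c$. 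Next, \eqref{eqn:1} gives $p(x_{n+1},x_{m+1})\le\max\{\alpha p(x_n,x_m),c_n,c_m\}$, and combining this with the triangle inequality one checks by induction on $j$ that for every $N$ the tail is bounded, $\sup_{j\ge 0}p(x_N,x_{N+j})\le\frac{p(x_N,x_{N+1})-c_{N+1}}{1-\alpha}+c_N<\infty$ (the space is not assumed bounded, so this step is genuinely needed). Iterating the contraction of the double sequence and using $c_n\to c$ then yields $p^m(x_n,x_m)\to 0$, so $(x_n)$ is Cauchy in $(U,p)$ and, by completeness, properly converges to some point of self-distance $c=\lim_n p(x_n,x_n)\le p(w,w)$.

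Now run this along a minimizing sequence: pick $z_k\in U$ with $p(z_k,z_k)\to\rho_p$, let $w_k$ be the proper limit of $(T^n(z_k))_n$, and set $a_k:=p(w_k,w_k)$, so $a_k\in[\rho_p,\,p(z_k,z_k)]$ and $a_k\to\rho_p$. The key computation: for fixed $j,k$ put $\delta_n:=p(T^n(z_j),T^n(z_k))$. By \eqref{eqn:1}, $\delta_{n+1}\le\max\{\alpha\delta_n,\,p(T^n(z_j),T^n(z_j)),\,p(T^n(z_k),T^n(z_k))\}$, while $\delta_n\ge\max\{p(T^n(z_j),T^n(z_j)),\,p(T^n(z_k),T^n(z_k))\}$; since both self-distances converge (to $a_j$, resp.\ $a_k$) and $\alpha<1$, this forces $\delta_n\to\max\{a_j,a_k\}$. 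Because $T^n(z_j)\to w_j$ and $T^n(z_k)\to w_k$ for the metric $p^m$, passing to $p^m$ gives $p^m(w_j,w_k)=2\max\{a_j,a_k\}-a_j-a_k=|a_j-a_k|$. Since $(a_k)$ converges, $(w_k)$ is therefore $p^m$-Cauchy, hence $p^m$-convergent to some $v\in U$, and the $1$-Lipschitzness of $x\mapsto p(x,x)$ for $p^m$ gives $p(v,v)=\lim_k a_k=\rho_p$. Thus $v\in U_p$, so $U_p\neq\emptyset$, and the first paragraph completes the proof.

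I expect the main obstacle to be the two middle paragraphs. Proving that an orbit of $T$ is Cauchy is not a one-line geometric-series estimate here, precisely because $U$ need not be bounded, so the tail-boundedness induction is essential; and the passage from ``all orbits converge'' to ``$U_p\neq\emptyset$'' hinges on identifying the limiting distance $p^m(w_j,w_k)$ \emph{exactly}, via the two-orbit computation. It is worth noting that this route deliberately avoids having to show that a single orbit converges to a \emph{fixed} point of $T$ (which can fail without continuity of $T$): the fixed point is produced only at the very end, automatically, by the Banach Contraction Principle on $(U_p,\ov{p}|_{U_p\times U_p})$.
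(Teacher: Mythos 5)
Your argument is correct, but it is considerably more self-contained than what the paper itself does: the paper does not prove the existence/uniqueness part of Theorem \ref{thm:1} at all --- it imports it from \cite{IPR} --- and its own contribution is confined to the two supplementary claims ($T(U_p)\subset U_p$ and continuity of $T$ at points of $U_p$, proved in the remark directly after the theorem) together with the observation (Remark \ref{rmk:1}, resting on Lemma \ref{lm:2}) that, \emph{granting} $U_p\neq\emptyset$, the whole statement collapses to the Banach Contraction Principle applied to $T|_{U_p}$ on the complete metric space $(U_p,\ov{p})$. Your first paragraph reproduces exactly that reduction and those two remarks. What you add, and what the paper leaves entirely to \cite{IPR}, is a direct derivation of $U_p\neq\emptyset$ from \eqref{eqn:1} and completeness; I checked its key steps and they hold: the self-distances $c_n=p(T^n(w),T^n(w))$ are non-increasing; the tail bound $\sup_{j}p(x_N,x_{N+j})\le \frac{p(x_N,x_{N+1})-c_{N+1}}{1-\alpha}+c_N$ does follow by induction from the partial-metric triangle inequality combined with $p(x_{N+1},x_{N+j+1})\le\max\{\alpha p(x_N,x_{N+j}),c_N\}$; iterating $p(x_{n+1},x_{m+1})\le\max\{\alpha p(x_n,x_m),c_{\min(n,m)}\}$ against that bound gives $\lim_{n,m}p(x_n,x_m)=\lim_n c_n$, so every orbit is Cauchy and properly convergent; and the two-orbit identification $p^m(w_j,w_k)=2\max\{a_j,a_k\}-a_j-a_k=|a_j-a_k|$ (legitimate, since proper convergence is $p^m$-convergence by Proposition \ref{thm:3:1} and $p^m$ is jointly continuous) correctly turns a minimizing sequence of self-distances into a $p^m$-Cauchy sequence whose limit has self-distance $\rho_p$, hence lies in $U_p$. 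The paper's presentation buys a sharper conceptual point --- that $U_p\neq\emptyset$ is the genuinely strong content, characterized separately in Theorem \ref{thm:3} --- while yours buys a complete proof that never needs an orbit to converge to a fixed point, which matters because a map satisfying \eqref{eqn:1} can be discontinuous off $U_p$ (cf.\ Example \ref{ex:rw}).
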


\begin{rem} 
Let us notice that in comparison to the original statement of Theorem \ref{thm:1} in \cite{IPR} we added two more observations, namely $T(U_p)\subset U_p$, and that the mapping $T$ is continuous at any $x\in U_p$. In fact, by \pref{eqn:1}, $\rho_p\leq p(T(x),T(x))\leq p(x,x)=\rho_p$ for $x\in U_p$, and we see that $T(x)\in U_p$. Further, if $\lim_{n\to\infty}p(x_n,x)=p(x,x),\, x_n\in U,\, n\in \bn,\, x\in U_p$, then $\rho_p\leq \lim_{n\to\infty} p(T(x_n),T(x))\leq \lim_{n\to\infty}\max\{\alpha p(x_n,x),p(x_n,x_n),p(x,x)\}=\max\{\alpha p(x,x),p(x,x)\}=\rho_p=p(T(x),T(x))$ and continuity follows. 

Let us also observe that, by Theorem \ref{thm:1}, if $U_p=\emptyset$, then there is no mapping $T:U\to U$ for which condition (\ref{eqn:1}) is satisfied. It is also evident that $U_p\neq\emptyset$ is independent of whether $(U,p)$ is complete or not - see Examples \ref{ex:1} and \ref{ex:2} below.
\end{rem}

The following lemma provides simple observations on the set $U_p$.
\begin{lm}\label{lm:2}
Let $(U,p)$ be a $\rho_p$-complete partial metric space and $x_n\in U$, $n\in \bn$.
\begin{enumerate}[label={\textup{(\arabic*)}},ref=\textup{\arabic*}]
\item\label{lm:2:1} if $\lim_{n\to\infty}p(x_n,x)=p(x,x)$ and $x\in U_p$, then the convergence is proper; {here the assumption of completeness can be discarded.}
\item\label{lm:2:2} $U_p\neq\emptyset$ if and only if there exists a $\rho_p$-Cauchy sequence of elements of  $U$.
\item\label{lm:2:3} $(U_p,\ov{p})$ is a complete metric space.
\end{enumerate}
\end{lm}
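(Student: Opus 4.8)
The plan is to treat the three parts separately, each following quickly from the partial metric axioms together with, where it is needed, the hypothesis of $\rho_p$-completeness. For \pref{lm:2:1} I would work directly from the axioms: since $x\in U_p$ we have $p(x,x)=\rho_p$, so the hypothesis becomes $\lim_{n\to\infty}p(x_n,x)=\rho_p$. Axiom~(2), applied to the pair $(x_n,x)$, gives $p(x_n,x_n)\le p(x,x_n)=p(x_n,x)$, while the definition of $\rho_p$ gives $\rho_p\le p(x_n,x_n)$. Combining these, $\rho_p\le p(x_n,x_n)\le p(x_n,x)$, and letting $n\to\infty$ the squeeze yields $\lim_{n\to\infty}p(x_n,x_n)=\rho_p=p(x,x)$; together with the hypothesis this is precisely proper convergence. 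No appeal to completeness is made, which is why the assumption can be discarded here.

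For \pref{lm:2:2}, the direction ``$\Leftarrow$'' is where $\rho_p$-completeness enters: given a $\rho_p$-Cauchy sequence $(x_n)_{n\in\bn}$, $\rho_p$-completeness yields $x\in U$ to which $(x_n)_{n\in\bn}$ properly converges, and then $p(x,x)=\rho_p$, so $x\in U_p$ and $U_p\neq\emptyset$. The direction ``$\Rightarrow$'' is immediate: if $x\in U_p$, the constant sequence $x_n:=x$ has $p(x_n,x_m)=p(x,x)=\rho_p$ for all $n,m$, hence is $\rho_p$-Cauchy.

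For \pref{lm:2:3} I would first note that on $U_p\times U_p$ one has $p(x,x)=p(y,y)=\rho_p$, so $p^m(x,y)=2p(x,y)-2\rho_p=2\,\ov{p}(x,y)$; since $p^m$ is a metric, $\ov{p}|_{U_p\times U_p}$ is a metric too (this is already observed in the Preliminaries). For completeness, take a Cauchy sequence $(x_n)_{n\in\bn}$ in $(U_p,\ov{p})$; then $\lim_{n,m\to\infty}p(x_n,x_m)=\rho_p$, i.e. $(x_n)_{n\in\bn}$ is a $\rho_p$-Cauchy sequence in $(U,p)$, so by $\rho_p$-completeness it properly converges to some $x\in U$ with $p(x,x)=\rho_p$, hence $x\in U_p$. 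Then $\ov{p}(x_n,x)=p(x_n,x)-\rho_p\to 0$, so $(x_n)_{n\in\bn}$ converges to $x$ in $(U_p,\ov{p})$, and completeness follows.

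I do not expect any genuine obstacle, since the content is elementary. The one point that needs care throughout is the bookkeeping between the partial metric and the genuine metrics: keeping straight the difference between $\rho_p$-Cauchy sequences and Cauchy sequences, between proper convergence and $\mathcal T[p]$-convergence, and using the identity $\ov{p}|_{U_p\times U_p}=\tfrac12\,p^m|_{U_p\times U_p}$ to pass between the two, so that the hypothesis of $\rho_p$-completeness can be invoked verbatim in parts \pref{lm:2:2} and \pref{lm:2:3}.
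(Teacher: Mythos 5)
Your proposal is correct and follows essentially the same route as the paper's proof: part (1) by squeezing $p(x_n,x_n)$ between $\rho_p$ and $p(x_n,x)$, part (2) via the constant sequence and the definition of $\rho_p$-completeness, and part (3) by translating a Cauchy sequence in $(U_p,\ov{p})$ into a $\rho_p$-Cauchy sequence in $(U,p)$ and using proper convergence to land back in $U_p$. The extra observations you include (the identity $p^m=2\,\ov{p}$ on $U_p\times U_p$ and the spelled-out converse in (2)) are details the paper leaves implicit, not a different argument.
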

\begin{proof}
\pref{lm:2:1} If $\lim_{n\to\infty}p(x_n,x)=p(x,x)=\rho_p$, then $p(x_n,x_n)\leq p(x_n,x)\leq \rho_p+\varepsilon$, for any fixed $\varepsilon>0$ and sufficiently large $n$. This shows that $\lim_{n\to\infty}p(x_n,x_n)=\rho_p$, and the claim follows.

\pref{lm:2:2} If $x\in U_p$, then the constant sequence $x_n:=x,\,n\in \bn$, is $\rho_p$-Cauchy. The other implication is obvious.

\pref{lm:2:3} Let $x_n\in U_p,\, n\in \bn$, be a Cauchy sequence, that is, $\lim_{n,m\to\infty}\ov{p}(x_n,x_m)=0$. It holds that $\lim_{n,m\to\infty} p(x_n,x_m)=\rho_p$, and hence, there is some $x\in U$ such that $\lim_{n\to \infty}p(x_n,x)=p(x,x)$. By the very definition of proper convergence it follows that $\rho_p=\lim_{n\to\infty}p(x_n,x_n)=p(x,x)=\lim_{n\to\infty}p(x_n,x)$, which gives $x\in U_p$ and $\lim_{n\to\infty}\ov{p}(x_n,x)=0$.

\end{proof}
\begin{rmk}\label{rmk:1}\textup{
Let us observe that the assumption $U_p\neq \emptyset$ (instead of deriving it as a conclusion) significantly simplifies the proof of Theorem \ref{thm:1}. Actually, in such a case it is a simple consequence of the Banach contraction principle. First, let us observe that $T(U_p)\subset U_p$ is an immediate consequence of the condition \pref{eqn:1} of Theorem \ref{thm:1}. Further, the metric space $(U_p,\ov{p})$ is complete (Lemma \ref{lm:2}.\pref{lm:2:3}). Finally, $\ov{T}:U_p\to U_p$, $\ov{T}(x):=T(x),\, x\in U_p$, is a contraction on $U_p$ because, for any $x,y\in U_p$, we have
\begin{multline*}\left[\,p(T(x),T(y))\leq \max\{\alpha p(x,y), p(x,x),p(y,y)\}\Leftrightarrow\right. \\p(T(x),T(y))-\rho_p\leq \max\{\alpha p(x,y)-\rho_p, 0,0\}\left.\right]\Rightarrow \ov{p}(\ov{T}(x),\ov{T}(y))\leq \alpha\ov{p}(x,y).\end{multline*}
}
\end{rmk}
Notice that a similar observation can be made on Matthews' generalization of the Banach contraction principle to partial metric spaces \cite{M}.

Remark \ref{rmk:1} raises the following question: should the fact $U_p\neq\emptyset$ be an assumption or a consequence in theorems like Theorem \ref{thm:1}? We shall show that none of possible answers is satisfactory - see Example \ref{ex:2} and Theorem \ref{thm:3} below. But before we pursue that issue let us show that Theorem \ref{thm:1} cannot be treated as a trivial consequence of the Banach contraction principle.
\begin{xmpl}\label{ex:0}\textup{
This is a slightly modified Example 3.1 from \cite{IPR} - the only modification is in the value of the mapping $T$ at $x=2$ (in the original example $T(2)=1$).
Let $U:=[0,1]\cup[2,3]$ and let the partial metric $p$ on $U$ be given by
$$p(x,y):=\left\{\begin{array}{cc}\max\{x,y\},& \text{ if }\{x,y\}\cap[2,3]\neq \emptyset, \\|x-y|, & \text{ if }\{x,y\}\subset [0,1].\end{array}\right.$$
Then $(U,p)$ is a complete partial metric space. Define $T:U\to U$ by
$$T(x):=\left\{\begin{array}{cc}\frac{x+1}{2}, & \text{ if }x\in [0,1],\\\frac{2+x}{2}, & \text{ if }x\in [2,3].\end{array}\right.$$
The function $T$ meets the condition \pref{eqn:1} of Theorem \ref{thm:1} with $\alpha=1/2$. Moreover, $T$ has two fixed points: $1$ and $2$. The bottom set of $(U,p)$ is $U_p=[0,1]$, and $\rho_p=0$. It should be emphasized that the fixed point $2\notin U_p$. This shows that Theorem \ref{thm:1} is essentially different from many fixed point theorems in which assumptions guarantee that fixed points are members of the bottom set of considered partial metric space.
Let us also notice that, for any $x\in U$, the sequence $(T^n(x))_{n\in \bn}$ converges to a fixed point of $T$, as $n\to\infty$; this is not true for the original version of the example as stated in \cite{IPR}.
}
\end{xmpl}

For a set $U$ and $z\in U$, by $T_z$ we denote the constant mapping $$T_z(x):=z,\, x\in U.$$

\begin{xmpl}\label{ex:1}\textup{
Let $U:=[0,1]$ and define a partial metric $p$ on $U$ by
$$ p(x,y):=\left\{\begin{array}{ll}0,&\text{ if } x=y>0,\\1,&\text{ if }x\neq y\text{ or } x=y=0,\end{array}\right.$$
for $x,y\in U$. Then $\lim_{n\to\infty} p(\frac{1}{n},0)=\lim_{n\to\infty} 1=1=p(0,0)$, so $\frac{1}{n}$ converges to $0$, although the convergence is not proper since $\lim_{n\to\infty} p(\frac{1}{n},\frac{1}{n})=\lim_{n\to\infty} 0=0$. It is obvious that $U_p=(0,1]$ and $U_p$ is not closed as a subset of the partial metric space $(U,p)$, so it is not a complete subspace of the partial metric space $(U,p)$. However, the metric space $(U_p, \ov{p})$ is complete (cf. Lemma \ref{lm:2}).
Observe that, for $z\in U$, the constant mapping $T_z$, $z\in (0,1]$, meets the condition \pref{eqn:1} of Theorem \ref{thm:1} for any $\alpha\in (0,1]$ and its only fixed point is $z$. It is also clear that the constant mapping $T_0$ is does not satisfy the condition \pref{eqn:1} for any $\alpha\in [0,1)$, since $p(T_0(1),T_0(1))=p(0,0)=1>0=\max\{\alpha p(1,1),p(1,1)\}$.
}
\end{xmpl}
\begin{xmpl}\label{ex:2}\textup{
Let $U:=\{\frac{1}{q+1}:\, q\in \bn\}\cup\{0\}$ and define a partial metric $p$ on $U$ by
$$p(x,y):=\left\{\begin{array}{ll}x,&\text{ if } x=y>0,\\1,&\text{ if }x\neq y\text{ or } x=y=0,\end{array}\right.$$
for $x,y\in U$.
Notice that $\rho_p=0$ and $p(x,x)>0$ for $x\in U$. Thus, $U_p=\emptyset$ and no mapping satisfying the condition \pref{eqn:1} of Theorem \ref{thm:1} exists.
The space $(U,p)$ is complete, because a sequence $({x_n})_{n\in \bn}\subset U$ is Cauchy if and only if it is eventually constant. Hence, the space $(U,p)$ is complete and, thus, $0$-complete as well, but there are no $0$-Cauchy sequences. This implies that no theorem that ensures the existence of a fixed point in $U_p$ can be applied to the space $(U,p)$.
For each sequence $(x_n)_{n\in \bn}$ in $U$, constant or not, we have $p(x_n,0)=p(0,0),\, n\in \bn,$ so it converges to $0$. Hence,  the space is not Hausdorff. The space $(U,p)$ is compact, since any open cover of $U$ has an element, say $V$, to which $0$ belongs to and there is an open ball $B(0,\varepsilon)$, $\varepsilon>0,$ with $B(0,\varepsilon)\subset V$. But any open ball centered at $0$ contains the set $U$.
}
\end{xmpl}
Examples \ref{ex:1} and \ref{ex:2}, together with Theorem \ref{thm:1}, suggest the following characterization of the non-emptiness of the bottom set of a complete partial metric space by constant functions.
\begin{thrm}\label{thm:3}
Let $(U,p)$ be a complete partial metric space. Then $U_p\neq \emptyset$ if and only if there is $z\in U$ such that the constant mapping $T_z$ satisfies the condition \pref{eqn:1} of Theorem \ref{thm:1} for any $\alpha \in [0,1)$. Moreover, $$U_p=\{z\in U:\, T_z\text{ satisfies the condition \pref{eqn:1} of Theorem \ref{thm:1}}\}.$$
\end{thrm}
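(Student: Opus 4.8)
The plan is to unwind what condition \eqref{eqn:1} says for a constant mapping $T_z$ and to observe that it forces $p(z,z)=\rho_p$; notably, the completeness of $(U,p)$ will not actually be used. For $z\in U$ and $\alpha\in[0,1)$, the mapping $T_z$ satisfies \eqref{eqn:1} with this $\alpha$ precisely when
\[
p(z,z)\le\max\{\alpha p(x,y),\,p(x,x),\,p(y,y)\}\qquad\text{for all }x,y\in U,
\]
since $T_z(x)=T_z(y)=z$. So the whole statement reduces to identifying, for all $z\in U$, when this inequality holds.

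First I would handle the implication $z\in U_p\Rightarrow T_z$ satisfies \eqref{eqn:1}. If $z\in U_p$, then $p(z,z)=\rho_p\le p(x,x)$ for every $x\in U$, so the right-hand side of the displayed inequality is $\ge p(x,x)\ge p(z,z)$ for all $x,y\in U$ and every $\alpha\ge 0$; hence $T_z$ meets \eqref{eqn:1} for every $\alpha\in[0,1)$ (in fact for every $\alpha\ge 0$). For the converse, assume $T_z$ satisfies \eqref{eqn:1} for some $\alpha\in[0,1)$. Putting $x=y$ in the displayed inequality and using $\alpha<1$ (so $\alpha p(x,x)\le p(x,x)$) gives $p(z,z)\le\max\{\alpha p(x,x),p(x,x)\}=p(x,x)$; as $x\in U$ is arbitrary, $p(z,z)\le\inf_{x\in U}p(x,x)=\rho_p$, and combined with $p(z,z)\ge\rho_p$ this yields $p(z,z)=\rho_p$, i.e. $z\in U_p$.

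These two steps give at once the displayed set equality $U_p=\{z\in U:\,T_z\text{ satisfies \eqref{eqn:1}}\}$, and taking the union over $z\in U$ produces the stated equivalence. There is no serious obstacle here; the one point worth making explicit is that for a constant map the particular value of $\alpha\in[0,1)$ is immaterial (the condition ``for some $\alpha$'' and ``for any $\alpha$'' coincide, and both are equivalent to $z\in U_p$), which is why the phrasing ``for any $\alpha\in[0,1)$'' in the statement is legitimate; one may also remark that completeness is never invoked, so the characterization in fact holds for an arbitrary partial metric space.
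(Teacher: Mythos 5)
Your proof is correct. For the forward implication it coincides with the paper's argument, and for the converse it is essentially the direct computation the paper itself supplies parenthetically: setting $x=y$ in \eqref{eqn:1} for $T_z$ gives $p(z,z)\le\max\{\alpha p(x,x),p(x,x)\}=p(x,x)$ for all $x$, hence $p(z,z)=\rho_p$. The only difference is that the paper's \emph{primary} route for the converse invokes Theorem \ref{thm:1} (the Ili\'c--Pavlovi\'c--Rako\v{c}evi\'c fixed point theorem), which is where the completeness hypothesis enters; your version bypasses that theorem entirely, and your closing remark that completeness is never used --- so the characterization holds in an arbitrary partial metric space --- is a correct and worthwhile observation that the paper does not make explicit. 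Your point that, for a constant map, ``for some $\alpha\in[0,1)$'' and ``for every $\alpha\in[0,1)$'' are equivalent is also accurate and tidies up the slightly awkward quantification in the statement.
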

\begin{proof} Suppose that $z\in U_p$, that is, $\rho_p=p(z,z)\leq p(x,y)$, $x,y\in U$. Then $p(T_z(x),T_z(y))=p(z,z)=\rho_p\leq \max\{\alpha p(x,y),p(x,x),p(y,y)\}$ for any $\alpha \in [0,1)$.

Assume now that there is $z\in U$ for which the constant mapping $T_z$ meets the condition \pref{eqn:1} of Theorem \ref{thm:1}. By Theorem \ref{thm:1}, the set $U_p$ is nonempty and it contains a fixed point of $T_z$. It is clear that $z$ is the only fixed point of $T_z$ and thus $z\in U_p$. (A direct proof without any reference to Theorem \ref{thm:1} is: if there is $z'\in U$: $p(z',z')<p(z,z)$, then $p(z,z)=p(T_z(z'),T_z(z'))\leq \max\{\alpha p(z',z'),p(z',z')\}=p(z',z')$ - a contradiction.)

From the above considerations we conclude that $$U_p=\{z\in U:\, T_z\text{ satisfies the condition \pref{eqn:1} of Theorem \ref{thm:1}}\}.$$
\end{proof}
In connection with Theorem \ref{thm:3}, let us consider the following
\begin{xmpl}\label{ex:3}\textup{
Let $U:=\{a,b\}$, $a\neq b$, and $p(a,a):=0,\,p(b,b):=1,\, p(a,b):=p(b,a):=2$. The function $p$ is a partial metric on $U$. Obviously, $U_p=\{a\}$, so the only constant mapping on $U$ satisfying the condition \pref{eqn:1} of Theorem \ref{thm:1} is $T_a$. Notice that $p(T_a(a),T_a(a))=p(a,a)=\max\{\alpha p(a,a),p(a,a)\}$, so $T_a$ is not a contraction. Observe also that $T_a$ is the only mapping from $U$ to $U$ that satisfies the condition \pref{eqn:1}. Indeed, if $T:U\to U$ is not constant, then $p(T(a),T(b))=2>\max\{\alpha p(a,b),p(a,a),p(b,b)\}=\max\{2\alpha ,1\}$ for any $\alpha\in [0,1)$.}
\end{xmpl}
At the end of this section let us present another fixed point theorem from the paper \cite{BR} in which there appears the assumption concerning the non-emptiness of the set $U_p$.
\begin{thm}[\cite{BR}, cf. \cite{ADA, MD}]\label{wrong}
Let $(U,p)$ be a complete partial metric space and let $T:U\rightarrow U$ be a mapping satisfying the condition, for $x,y\in U$ 
\begin{equation}\label{eq2}
\min\{p(T(x),T(y)),\cdots,p(T^k(x),T^k(y))\}\leq \frac{p(x,x)+p(y,y)}{2},
\end{equation}
where $k\in \bn$ is fixed. If $U_p\not=\emptyset$, then $T$ possesses a unique fixed point in $U$.
\end{thm}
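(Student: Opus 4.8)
The plan is to get uniqueness by a one‑line computation and existence by pushing everything into the bottom set $U_p$, which is nonempty by hypothesis; I would fix once and for all a point $z\in U_p$.

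For uniqueness, if $u$ and $v$ are both fixed points of $T$, then $T^i(u)=u$ and $T^i(v)=v$ for every $i$, so \eqref{eq2} applied to $(u,v)$ collapses to $p(u,v)\le\frac12(p(u,u)+p(v,v))$; on the other hand axiom (2) gives $p(u,v)\ge p(u,u)$ and $p(u,v)\ge p(v,v)$, hence $p(u,v)=p(u,u)=p(v,v)$ and $u=v$ by axiom (1). Completeness plays no role here.

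For existence I would proceed in three steps. First, applying \eqref{eq2} to $(z,z)$ and using $p(x,x)\ge\rho_p$ for all $x$, one gets $\min_{1\le i\le k}p(T^i(z),T^i(z))\le p(z,z)=\rho_p$, so $T^{i_0}(z)\in U_p$ for some $i_0\in\{1,\dots,k\}$. Second, since now both $z$ and $T^{i_0}(z)$ lie in $U_p$, the right‑hand side of \eqref{eq2} for this pair equals $\rho_p$, so $\min_{1\le i\le k}p(T^i(z),T^{i+i_0}(z))\le\rho_p$; choosing $j$ to attain the (necessarily exactly $\rho_p$) minimum, axiom (2) forces $T^j(z),T^{j+i_0}(z)\in U_p$ and $\ov{p}(T^j(z),T^{j+i_0}(z))=0$, so $T^j(z)=T^{j+i_0}(z)$ because $\ov{p}$ is a genuine metric on $U_p$. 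Hence $w:=T^j(z)\in U_p$ is a periodic point with $T^{i_0}(w)=w$; let $d\mid i_0$ be its minimal period, so $d\le k$. Third — the crux — I would show $d=1$. Assume $d\ge 2$ and set $c_l:=p(T^l(w),T^l(w))$, periodic in $l$ with $c_0=\rho_p$. If $c_l=\rho_p$ for some $l\not\equiv 0\pmod d$, then $w$ and $T^l(w)$ are two \emph{distinct} points of $U_p$, and \eqref{eq2} applied to them forces $T^i(w)=T^{i+l}(w)$ for some $i$, impossible along a genuine $d$‑cycle; hence $\gamma:=\min_{1\le l\le d-1}c_l>\rho_p$. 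Picking $a_0$ with $c_{a_0}=\gamma$ and applying \eqref{eq2} to $(w,T^{a_0}(w))$: the right‑hand side is $\frac12(\rho_p+\gamma)<\gamma$, while, since $i$ running over $1,\dots,k$ sweeps all residues modulo $d$, for every residue $r$ we have $p(T^r(w),T^{r+a_0}(w))\ge\max(c_r,c_{r+a_0})\ge\gamma$ (the cases $r\equiv 0$ and $r+a_0\equiv 0$ cannot both occur, as $a_0\not\equiv 0$), so the left‑hand side is $\ge\gamma$ — contradiction. Thus $d=1$, and $w\in U_p$ is the desired fixed point.

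The step I expect to be the main obstacle is the third one: excluding non‑trivial cycles of $T$ through $U_p$. The non‑obvious ingredient is the choice of the test pair $(w,T^{a_0}(w))$ with $a_0$ indexing the cycle point of \emph{minimal} self‑distance $\gamma$ — this is precisely what makes every term of the minimum in \eqref{eq2} at least $\gamma$ while the right‑hand side stays strictly below $\gamma$. As a side remark, neither the existence nor the uniqueness argument uses completeness of $(U,p)$, which suggests that the completeness hypothesis in the statement is superfluous once $U_p\neq\emptyset$ is assumed.
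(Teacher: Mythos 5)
Your argument is correct, and I cannot compare it against ``the paper's own proof'' because the paper gives none: Theorem~\ref{wrong} is imported from \cite{BR} and is stated here without proof, so your write-up is the only complete argument on the table. Checking it step by step: uniqueness follows exactly as you say from \eqref{eq2} together with axiom (2) (self-distances are minorized by cross-distances) and axiom (1). For existence, Step 1 (applying \eqref{eq2} to $(z,z)$ with $z\in U_p$ to land some $T^{i_0}(z)$ in $U_p$) and Step 2 (applying it to $(z,T^{i_0}(z))$ to force $p(T^j(z),T^{j+i_0}(z))=\rho_p$ and hence, via axioms (2) and (1), $T^j(z)=T^{j+i_0}(z)$) are both sound, producing a periodic point $w\in U_p$ of minimal period $d\mid i_0\leq k$. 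Step 3 is the genuinely clever part and it works: the preliminary claim that $c_l>\rho_p$ for $l\not\equiv 0\pmod d$ is justified by the same collision argument as Step 2, and then testing \eqref{eq2} on $(w,T^{a_0}(w))$ with $c_{a_0}=\gamma$ minimal gives a right-hand side $\tfrac12(\rho_p+\gamma)<\gamma$ while every term on the left is $\geq\max(c_i,c_{i+a_0})\geq\gamma$, since $i\equiv 0$ and $i+a_0\equiv 0\pmod d$ cannot hold simultaneously. (Your aside that $i$ ``sweeps all residues'' is not needed for this; what matters, and what you do verify, is that \emph{every} term of the minimum is $\geq\gamma$.) Two remarks: your conclusion that the fixed point lies in $U_p$ is consistent with Remark~\ref{rmk:q}\pref{rmk:q:2} of the paper, which proves independently that any fixed point of such a $T$ must belong to $U_p$; and your observation that completeness is never used is accurate --- the whole argument is a finite combinatorial analysis of a single periodic orbit, so the hypotheses ``complete'' and ``$U_p\neq\emptyset$'' are doing, respectively, no work and all the work. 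That is a genuine (if modest) sharpening of the statement as quoted.
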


\begin{rem}\label{rmk:q}

\begin{enumerate}[label={\textup{(\alph*)}},ref=\textup{\alph*}]
\item\label{rmk:q:1} If $(U,p)$ is a complete metric space (so a complete partial metric space, as well) and $T$ is a contraction on $U$ (that is, $p(T(x),T(y))\leq Lp(x,y),\, x,y\in U$, for some fixed $L\in [0,1)$), then $T$ satisfies condition \eqref{eq2} if and only if $T^k$ is a constant mapping.

We have $0\leq p(T^k(x),T^k(y))=\min\{p(T(x),T(y)),\ldots, p(T^k(x),T^k(y))\}\leq (p(x,x)+p(y,y))/2=0$ and therefore $p(T^k(x),T^k(y))=0,\, x,y\in U$. This implies that $T^k$ is a constant mapping.
\item\label{rmk:q:2} Let $T$ be as in Theorem \ref{wrong}. If $x=T(x)$, then $x\in U_p$ and, consequently, $U_p\neq\emptyset$.

Suppose that $x=T(x)$, but $x\notin U_p$. Thus there exists $x'\in U$ with $p(x',x')<p(x,x)$. Assume for a while that $T(x')\neq x'$. From (\ref{eq2}) we have $
\min\{p(T(x'),T(x)),\ldots, p(T^k(x'),T^k(x))\}\leq (p(x',x')+p(x,x))/2<p(x,x)$ and, for some $i\in \{1,\ldots, k\}$, $p(T^i(x'),x)=p(T^i(x'),T^i(x))<p(x,x)\leq p(T^i(x'),x)$ which cannot be true. Hence, if $p(x',x')<p(x,x)$, then $T(x')=x'$. This shows that any fixed point of $T$ belongs to the bottom set $U_p$. This fact and condition \pref{eq2} easily entail that there is at most one fixed point of $T$. We conclude that $x\in U_p$.
\item\label{rmk:q:3} Let $T$ be as in Theorem \ref{wrong}. Then $U_p\neq \emptyset$ if and only if $T$ has a unique fixed point.

This comes from Theorem \ref{wrong} and item \pref{rmk:q:2}.
\end{enumerate}
\end{rem}
\begin{rem}\label{rmk:qq}
Let $(U,p)$ be a complete partial metric space. Then
\begin{enumerate}[label={\textup{(\alph*)}},ref=\textup{\alph*}]
\item\label{rmk:qq:1} $U_p\neq\emptyset $ if and only if there is a mapping $T:U\to U$ that satisfies condition \pref{eq2} of Theorem \ref{wrong} and possesses a fixed point.

Suppose that $a\in U_p$. Define $T(x):=a$, $x\in U$ - it is easy to check that $T$ meets the condition \pref{eq2} of Theorem \ref{wrong}. The other implication is a consequence of item \pref{rmk:q:2} of Remark \ref{rmk:q}.

\item\label{rmk:qq:2} $U_p=\emptyset$ if and only if no constant mapping $T:U\to U$ satisfies condition (\ref{eq2}) of Theorem \ref{wrong}.

Suppose that $U_p=\emptyset$. Let $a\in U$ and define $T(x):=a,\, x\in U$. Obviously, there exists $x\in U$ with $p(x,x)<p(a,a)$. By the condition (\ref{eq2}) of Theorem \ref{wrong}, we get $p(a,a)=\min\{p(T(a),T(x)),\ldots, p(T^k(a),T^k(x))\}\leq (p(a,a)+p(x,x))/2$ which implies that $p(a,a)\leq p(x,x)$ - an absurd.
The reverse implication stems from item \pref{rmk:qq:1}.
\end{enumerate}
\end{rem}

\subsection*{Acknowledgements}
Ruidong Wang is grateful to the Faculty of Mathematics and Computer Science, Adam Mickiewicz University in Pozna\'{n}, for the excellent working conditions during the visit in February 2018 to August 2018. Also this research was partly supported by the Natural Science Foundation of China (Grant Nos. 11201337, 11201338, 11371201, 11301384).

\end{document}